\documentclass{amsart}

\usepackage{amsthm,amsmath,amssymb,amsfonts,dsfont}
\usepackage[colorlinks, linkcolor=blue, citecolor=blue]{hyperref}
\usepackage[indentafter]{titlesec}
\titleformat{name=\section}{}{\thetitle.}{0.8em}{\centering\scshape}
\usepackage{mathrsfs}
\usepackage{bm}
\usepackage{titlesec}
\numberwithin{equation}{section}
\titleformat{\subsection}[runin]
  {\normalfont\bfseries}{\thesubsection}{1em}{}
\newtheorem{definition}{\textbf{Definition}}[section]
\newtheorem{theorem}[definition]{\textbf{Theorem}}
\newtheorem{corollary}[definition]{\textbf{Corollary}}
\newtheorem{lemma}[definition]{\textbf{Lemma}}

\newtheorem{letterthm}{Theorem}

\usepackage{thmtools}
\swapnumbers
\declaretheoremstyle[bodyfont=\normalfont]{normalbody}

\declaretheorem[numberlike=hyperstate,style=normalbody, name=Weakly mixing bimodules and property (T)]{prop (T)}
\declaretheorem[numberlike=hyperstate,style=normalbody, name=Mixing bimodules and Haagerup property]{prop (H)}

\declaretheorem[numbered=no,name=Theorem A]{Theorem A}
\declaretheorem[numbered=no,name=Theorem B]{Theorem B}
\declaretheorem[numbered=no,name=Theorem C]{Theorem C}

\def\d{\mathrm{d}}

\def\CS{\mathcal{S}}
\def\CB{\mathcal{B}}
\def\CP{\mathcal{P}}
\def\CA{\mathcal{A}}
\def\har{\mathrm{Har}}

\def\N {\mathbb{N}}

\def\C {\mathbb{C}}
\def\UCP {\mathrm{UCP}}

\title[Hypertrace and entropy gap characterizations of property (T)]{Hypertrace and entropy gap characterizations of property (T) for $\mathrm{II}_1$ factors}
\author{Shuoxing Zhou}
\address{\'Ecole Normale Sup\'erieure\\ D\'epartement de math\'ematiques et applications\\ 45 rue d'Ulm\\ 75230 Paris Cedex 05\\ FRANCE}
\email{shuoxing.zhou@ens.psl.eu}

\begin{document}

\maketitle
\begin{abstract}
We establish a hypertrace characterization of property (T) for $\mathrm{II}_1$ factors: Given a $\mathrm{II}_1$ factors $M$, $M$ does not have property (T) if and only if there exists a von Neumann algebra $\mathcal{A}$ with $M\subset \mathcal{A}$ such that $\mathcal{A}$ admits a $M$-hypertrace but no normal hypertrace. For $M$ without property (T), such an inclusion $M\subset \mathcal{A}$ also admits almost vanishing Furstenberg entropy. With the same construction of $M\subset \mathcal{A}$, we also establish similar characterizations of Haagerup property for $\mathrm{II}_1$ factors.
\end{abstract}

\section{Introduction}

In ergodic group theory, an important way to study the properties of a countable discrete group $\Gamma$ is by considering different nonsingular actions $\Gamma \curvearrowright (X,\nu_X)$. A classical example is to study amenability of $\Gamma$ by the left translation action $\Gamma \curvearrowright \Gamma$. For a nonsingular action $\Gamma \curvearrowright (X,\nu_X)$, we have $L(\Gamma)\subset L(\Gamma \curvearrowright X)$. Therefore, for a tracial von Neumann algebra $(M,\tau)$, the noncommutative analogue of $\Gamma \curvearrowright X$ is naturally the inclusion $M\subset \CA$ for different von Neumann algebras $\CA$. A classical example for the application of $M\subset\CA$ is to study amenability of $(M,\tau)$ through the inclusion $M\subset B(L^2(M))$.

Just like amenability, property (T) of $\Gamma$ can also be studied through the group actions $\Gamma \curvearrowright (X,\nu_X)$. For a nonsingular action $\Gamma \curvearrowright (X,\nu_X)$ and a measure $\mu\in\mathrm{Prob}(\Gamma)$, the \textbf{Furstenberg entropy} \cite{Fu63} of $(X,\nu_X)$ with respect to $\mu$ is defined to be 
$$h_\mu(X,\nu_X)=-\int_{\Gamma\times X}\log\left(\frac{\d \gamma^{-1}\nu_X}{\d \nu_X}(x)\right)\d \mu(\gamma)\d \nu_X(x).$$
A nonsingular action $\Gamma \curvearrowright (X,\nu_X)$ is said to be properly nonsingular if $\nu_X$ is not equivalent to a $\Gamma$-invariant probability measure. It is proved in \cite{Ne03} and \cite{BHT14} that $\Gamma$ has property (T) if and only if it has entropy gap, that is, for any generating measure $\mu\in\mathrm{Prob}(\Gamma)$, there exists $\epsilon=\epsilon(\mu)>0$ such that for any ergodic, properly nonsingular action $\Gamma \curvearrowright (X,\nu_X)$, one has $h_\mu(X,\nu_X)>\epsilon$.

Recently, Das-Peterson \cite{DP20} extended the entropy theory to the noncommutative setting $M\subset \CA$, which we refer to \ref{hyperstate} and \ref{NC entropy} for a brief introduction of the definition. They also proved that any $\mathrm{II}_1$ factor with property (T) has entropy gap (Definition \ref{entropy gap}), while the converse is left open. Then Amine Marrakchi asked the following question: Does entropy gap characterize property (T) for $\mathrm{II}_1$ factors? The goal of this paper is to answer this question. 

Recently, with the tool of Gaussian functor, Arano-Isono-Marrakchi \cite[Corollary 7.5]{AIM21} established a series of new characterizations of property (T) for locally compact group, including the entropy gap characterization: For a locally compact group $G$, the following are equivalent:
\begin{itemize}
\item [(i)] $G$ does not have property (T);
\item [(ii)] $G$ admits a nonsingular action which has an invariant mean but no invariant probability measure;
\item[(iii)] $G$  admits a nonsingular action which has almost vanishing entropy but no invariant probability measure.
\end{itemize}

Moreover, Arano-Isono-Marrakchi \cite[Corollary 7.6]{AIM21} also established a series of similarly characterizations of Haagerup property for locally compact group: For a locally compact group $G$, the following are equivalent:
\begin{itemize}
\item [(i)] $G$ has the Haagerup property;
\item [(ii)] $G$ admits a nonsingular action $\sigma: G \curvearrowright X$ of zero-type (i.e., the Koopman representation $\pi_\sigma: G \curvearrowright L^2(X)$ is mixing) which has an invariant mean;
\item[(iii)] $G$ admits a nonsingular action of zero-type with almost vanishing entropy.
\end{itemize}

Inspired by the results above, with the tool of Shlyakhtenko's $A$-valued semicircular system \cite{Sh99}, which can be considered as the noncommutative analogue of Gaussian factor, we establish a series of similar characterizations of property (T) for $\mathrm{II}_1$ factor and we answer Marrakchi's question. Before presenting the theorem, we refer to subsection \ref{def (T)} for the definition of property (T) and subsection \ref{hyperstate} and \ref{NC entropy} for the notations of entropy theory on $M\subset \CA$.

\begin{letterthm}\label{thm A}
Let $(M,\tau)$ be a separable $\mathrm{II}_1$ factor. The following conditions are equivalent.
\begin{itemize}
\item [(i)] $M$ does not have property (T);
\item [(ii)]There exists an inclusion $M\subset\CA$ such that $L^2(\CA)$ has $M$-almost central unit vectors, but no non-zero $M$-central vectors;
\item [(iii)] There exists an inclusion $M\subset\CA$ such that there exists a $(M,\tau)$-hypertrace on $\CA$, but no normal hypertrace;
\item [(iv)] There exists an inclusion $M\subset\CA$ such that there exists a conditional expectation from $\CA$ onto $M$, but no normal conditional expectation;
\item [(v)] There exists an inclusion $M\subset\CA$ without normal conditional expectation that admits almost vanishing Furstenberg entropy, that is, there exists a normal regular strongly generating hyperstate $\varphi\in\CS_\tau(B(L^2(M)))$ and a net of normal faithful hyperstates $(\varphi_i)_{i\in I}$ on $\CA$ such that $$\lim_i h_\varphi(\CA,\varphi_i)=0;$$
\item [(vi)] $M$ does not have entropy gap.
\end{itemize}
\end{letterthm}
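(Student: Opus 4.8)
The plan is to prove the six conditions equivalent by running them as a cycle, in which a single implication --- that the failure of property (T) produces the inclusions in (ii)--(v) --- carries all the content, and the remaining links are the standard dictionary between hypertraces, conditional expectations, bimodule central vectors and Furstenberg entropy. For the soft directions: if $M\subset\CA$ is an inclusion as in (ii), then ${}_M L^2(\CA)_M$ is an $M$-$M$ bimodule with almost central unit vectors and no non-zero central vector, so $M$ does not have property (T); thus (ii)$\Rightarrow$(i). The equivalence (ii)$\Leftrightarrow$(iii)$\Leftrightarrow$(iv) is formal: if $\varphi$ is an $M$-central state on $\CA$ with $\varphi|_M=\tau$, then for every $x\in\CA$ and every bounded increasing net $m_j\nearrow 1$ in $M$ one has $\varphi(x(1-m_j))=\varphi((1-m_j)^{1/2}x(1-m_j)^{1/2})\le\varphi(1-m_j)\to 0$ by $M$-centrality, so $m\mapsto\varphi(xm)$ is normal on $M$ and yields an element $E(x)\in M$ with $\tau(E(x)m)=\varphi(xm)$; this $E$ is a conditional expectation, normal exactly when $\varphi$ is, and a non-zero $M$-central vector in $L^2(\CA)$ conversely gives a (normal) vector-state hypertrace. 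Finally (vi)$\Rightarrow$(i) is the contrapositive of Das--Peterson's theorem \cite{DP20} that property (T) implies entropy gap, and (v)$\Leftrightarrow$(vi) is obtained by unwinding the definition of entropy gap (Definition~\ref{entropy gap}). So everything reduces to building, from the failure of (T), an inclusion $M\subset\CA$ realizing (ii)--(iv) together with a net of normal faithful hyperstates of vanishing entropy.

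For this construction I would first invoke the characterization of property (T) via weakly mixing bimodules: since $M$ lacks property (T), there is a weakly mixing $M$-$M$ bimodule $\mathcal H$ carrying a sequence of tracial almost central unit vectors $(\xi_n)$ and no non-zero central vector, and after replacing $\mathcal H$ by $\mathcal H\oplus\overline{\mathcal H}$ (which preserves weak mixing, traciality of the almost central vectors, and absence of central vectors) we may assume $\mathcal H$ symmetric. Apply Shlyakhtenko's $M$-valued semicircular construction \cite{Sh99} to obtain $\Phi(M,\mathcal H)\supset M$, with faithful vacuum state $\psi$, canonical conditional expectation $E$, and $L^2(\Phi(M,\mathcal H),\psi)=\bigoplus_{n\ge 0}\mathcal H^{\otimes_M n}$, where $\mathcal H^{\otimes_M 0}=L^2(M)$. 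The algebra $\Phi(M,\mathcal H)$ is not itself a witness, since the vacuum summand $L^2(M)$ makes $E$ normal; but this summand can be deleted by passing to the Jones basic construction $\langle\Phi(M,\mathcal H),e_M\rangle=(J_\psi M J_\psi)'$ and cutting by $1-e_M$, where $e_M$ is the Jones projection onto $L^2(M)\subset L^2(\Phi(M,\mathcal H))$. Set $\CA:=(1-e_M)\langle\Phi(M,\mathcal H),e_M\rangle(1-e_M)$; since $e_M$ commutes with $M$, the map $m\mapsto m(1-e_M)$ is a (unital) normal embedding of $M$ into $\CA$. Using $L^2(\langle\Phi(M,\mathcal H),e_M\rangle,\mathrm{Tr})\cong L^2(\Phi(M,\mathcal H))\otimes_M L^2(\Phi(M,\mathcal H))$ and the fact that under this identification $e_M$ and $J_{\langle\Phi(M,\mathcal H),e_M\rangle}\,e_M\,J_{\langle\Phi(M,\mathcal H),e_M\rangle}$ are the projections onto the vacuum in the left, respectively right, tensor factor, one computes
$$ {}_M L^2(\CA)_M \;\cong\; \Big(\bigoplus_{n\ge 1}\mathcal H^{\otimes_M n}\Big)\otimes_M\Big(\bigoplus_{m\ge 1}\mathcal H^{\otimes_M m}\Big)\;=\;\bigoplus_{k\ge 2}(k-1)\,\mathcal H^{\otimes_M k}. $$
This bimodule carries the almost central vectors $\xi_n\otimes_M\overline{\xi_n}\in\mathcal H\otimes_M\overline{\mathcal H}\cong\mathcal H^{\otimes_M 2}$: almost centrality follows from $\|m\xi_n-\xi_n m\|\to 0$, while near-unitarity of these vectors and the fact that the associated vector states restrict to $\tau$ on $M$ in the limit follow from traciality of $(\xi_n)$. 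Hence $\CA$ admits an $M$-hypertrace --- an ultralimit of these vector states --- and it has no non-zero $M$-central vector, because weak mixing of $\mathcal H$ propagates through Connes fusion, so that no $\mathcal H^{\otimes_M k}$ with $k\ge 1$ contains the trivial bimodule, and the trivial summand has been excised. Thus $M\subset\CA$ satisfies (ii), hence (iii) and (iv): $\CA$ admits a conditional expectation onto $M$ but no normal one.

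For (v) I would take $\varphi_i$ to be the vector states on $\CA$ attached to $\xi_i\otimes_M\overline{\xi_i}$, regularized to be faithful by adding a fixed small faithful normal state, and fix a normal regular strongly generating hyperstate $\varphi$. One then checks, via the Das--Peterson entropy formula \cite{DP20}, that $h_\varphi(\CA,\varphi_i)\to 0$: the Furstenberg entropy of $\varphi_i$ quantifies how far $\varphi_i$ is from being $\varphi$-stationary, and this is controlled by the defect of almost-centrality of $\xi_i$, which tends to $0$. Since $\CA$ still has no normal conditional expectation onto $M$, this gives (v), and (v)$\Rightarrow$(vi)$\Rightarrow$(i) then closes the cycle, answering Marrakchi's question.

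I expect two steps to be the main obstacles. First, pinning down the modification of $\Phi(M,\mathcal H)$: the compressed basic construction must simultaneously keep $M\subset\CA$ co-amenable (so that an $M$-hypertrace survives) and destroy the normal conditional expectation (so that $L^2(\CA)$ has no non-zero $M$-central vector), and the identification of ${}_M L^2(\CA)_M$ above, together with the propagation of weak mixing through Connes fusion, must be verified carefully --- this is where Jones' basic construction, the weakly mixing/fusion machinery, and the multiplicity bookkeeping all enter. Second, the entropy estimate in (v): interfacing the Das--Peterson entropy functional with the semicircular/Fock picture, and making the vanishing uniform enough to be realized by a genuine net, is the technical heart of the ``almost vanishing entropy'' statement.
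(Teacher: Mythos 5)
Your construction is the one the paper uses: Tan's weakly mixing bimodule $H$ with almost central unit vectors, symmetrization via $H\oplus\bar H$, Shlyakhtenko's $M$-valued semicircular system $\tilde M$ with $L^2(\tilde M)=\mathcal F_M(H)$, the basic construction $\langle\tilde M,e_M\rangle$ compressed by $e_M^\perp$, and the identification $L^2(\CA)\cong\bigoplus_{m,n\ge1}H^{\otimes_M^{m+n}}$; the dictionary (ii)$\Leftrightarrow$(iii)$\Leftrightarrow$(iv) and the closing of the cycle through Das--Peterson's ``(T) $\Rightarrow$ entropy gap'' are likewise as in the paper. (Your extra arrow (ii)$\Rightarrow$(i) is correct but redundant; your parenthetical claim that (vi)$\Rightarrow$(v) is mere ``unwinding of the definition'' is not right, since failure of entropy gap a priori produces a different inclusion for each $\epsilon$ --- but this is harmless because your cycle also passes through (vi)$\Rightarrow$(i).)

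The genuine gap is in (v), which you correctly flag as the technical heart but then dispatch with a heuristic that is not how the estimate works. Two concrete problems. First, ``regularized to be faithful by adding a fixed small faithful normal state'' spoils the conclusion: with a fixed weight $\epsilon>0$ the states no longer converge to a hypertrace and there is no reason for the entropy to vanish; the faithful perturbation must have weight tending to $0$, and one must also correct the vector states of $\xi_i\otimes_M\bar\xi_i$ so that they restrict \emph{exactly} to $\tau$ on $M$ (this uses subtraciality of the vectors and an extension of $\tau-\varphi_i'|_M$ to a normal positive functional on $B(L^2(\CA))$). Second, and more seriously, ``the entropy quantifies the stationarity defect, controlled by the almost-centrality defect'' does not by itself even show $h_\varphi(\CA,\varphi_i)<\infty$, since $e\log\Delta_{\varphi_i}e$ has no a priori lower bound. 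The missing idea is a subordination step: replace each $\varphi_i$ by $\bigl(\sum_{k\ge0}2^{-k-1}\varphi^{*k}\bigr)*\varphi_i$, which still converges to the hypertrace but now satisfies $\varphi*\varphi_i\le2\varphi_i$; this yields the two-sided resolvent bound $(1+t)^{-1}\le e(\Delta_{\varphi_i}+t)^{-1}e\le(\tfrac12A_\varphi+t)^{-1}$, so that $F_i(t)=\varphi\bigl(e[(\Delta_{\varphi_i}+t)^{-1}-(1+t)^{-1}]e\bigr)$ is nonnegative and dominated by an integrable function $G$ with $\int_0^\infty G=H(\varphi)+\log2$ --- which in turn requires choosing $\varphi$ with $H(\varphi)<\infty$, a hypothesis you omit. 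Almost-centrality, in the form $\|(1-\Delta_{\varphi_i}^{1/2})z_k\xi_{\varphi_i}\|\to0$, then gives locally uniform convergence $F_i\to0$, and dominated convergence finishes. Without the subordination and the finite-entropy choice of $\varphi$, the interchange of limit and integral --- i.e.\ all of (v) --- is unjustified.
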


The construction of $M\subset\CA$ in Theorem \ref{thm A} also applies to $\mathrm{II}_1$ factors with Haagerup property. Hence we have the following theorem of characterization of Haagerup property for $\mathrm{II}_1$ factors, which is is the noncommutative analogue of \cite[Corollary 7.6]{AIM21}:
\begin{letterthm}\label{thm B}
Let $(M,\tau)$ be a separable $\mathrm{II}_1$ factor. The following conditions are equivalent.
\begin{itemize}
\item [(i)] $M$ has the Haagerup property;
\item [(ii)] There exists an inclusion $M\subset\CA$ such that $L^2(\CA)$ is $M$-mixing and has $M$-almost central unit vectors;
\item [(iii)] There exists an inclusion $M\subset\CA$ such that $L^2(\CA)$ is $M$-mixing and there exists a $(M,\tau)$-hypertrace on $\CA$;
\item [(iv)] There exists an inclusion $M\subset\CA$ such that $L^2(\CA)$ is $M$-mixing and $M\subset \CA$ admits almost vanishing Furstenberg entropy.
\end{itemize}
\end{letterthm}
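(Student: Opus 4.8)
The plan is to follow the scheme of Theorem~\ref{thm A}, re-using the \emph{same} construction of the inclusion $M\subset\CA$ — the one obtained from an $M$-$M$ correspondence via Shlyakhtenko's $M$-valued semicircular system \cite{Sh99} (see \ref{NCGF}) and the associated noncommutative Poisson boundary (see \ref{NCPB}) — but now feeding in a \emph{mixing} correspondence and keeping track of the mixing property throughout. Concretely I would prove the cycle (i) $\Rightarrow$ (iv) $\Rightarrow$ (iii) $\Rightarrow$ (ii) $\Rightarrow$ (i). Beyond what is already established for Theorem~\ref{thm A}, the two new ingredients are: (a) the class of mixing $M$-$M$ bimodules (in the sense of \ref{prop (H)}) is stable under the operations used to build $\CA$ from the correspondence $\mathcal H$ — in particular under Connes fusion with an arbitrary bimodule (see \ref{tensor}) and under the countable direct sums occurring in the Fock construction — so that $L^2(\CA)\ominus L^2(M)$ inherits the mixing property; and (b) a separable $\mathrm{II}_1$ factor admitting a mixing $M$-$M$ bimodule with almost central unit vectors has the Haagerup property, which is precisely the bimodular characterization recorded in \ref{prop (H)}.

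For (i) $\Rightarrow$ (iv): by \ref{prop (H)} the Haagerup property of $M$ yields a mixing $M$-$M$ correspondence $\mathcal H$ carrying a net of almost central unit vectors; feed $\mathcal H$ into the construction of Theorem~\ref{thm A}. Using the explicit description of the standard form of $\CA$ as a Fock-type $M$-bimodule assembled from the Connes fusion powers of $\mathcal H$, ingredient (a) shows that $L^2(\CA)$ is $M$-mixing. The Furstenberg-entropy estimate already used for Theorem~\ref{thm A} — degenerating the parameter of the $M$-valued Ornstein--Uhlenbeck (number-operator) semigroup attached to $\mathcal H$ — is insensitive to the mixing property and produces a net of normal faithful hyperstates on $\CA$ whose Furstenberg entropy tends to $0$; this gives (iv).

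The implications (iv) $\Rightarrow$ (iii) $\Rightarrow$ (ii) keep the algebra $\CA$ fixed, so the mixing property is carried along for free, and only require producing, in turn, an $(M,\tau)$-hypertrace and then $M$-almost central unit vectors. For the first: a weak-$*$ cluster point $\psi$ of the net of hyperstates has vanishing Furstenberg entropy by lower semicontinuity, and a zero-entropy hyperstate is $M$-central by the entropy formula of \cite{DP20}; since $M$ is a factor its unique tracial state is $\tau$, so $\psi|_M=\tau$ automatically and $\psi$ is an $(M,\tau)$-hypertrace. For the second: given an $(M,\tau)$-hypertrace $\varphi$ on $\CA$, a Day--Namioka convexity argument — $\varphi$ is an $M$-central state lying in the weak-$*$ closure of the normal states of $\CA$, each of which is a vector state on $L^2(\CA)$ — together with the Powers--St{\o}rmer inequality produces a net of $M$-almost central unit vectors in $L^2(\CA)$.

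The step (ii) $\Rightarrow$ (i) is where the mixing hypothesis must be exploited, and I expect it to be the main obstacle. Given $M$-almost central unit vectors $(\xi_i)$ in $L^2(\CA)$, decompose $\xi_i=\xi_i^0+\xi_i'$ along the orthogonal decomposition $L^2(M)\oplus\bigl(L^2(\CA)\ominus L^2(M)\bigr)$ into $M$-sub-bimodules; both components remain almost central. One must rule out the mass concentrating on the trivial sub-bimodule $L^2(M)$ (if that is not already excluded by the definition of $M$-mixing), on which almost central unit vectors are present for trivial reasons: on the forward direction this is automatic since the almost central vectors one writes down live in the degree-one part of the Fock bimodule, while on an abstract $\CA$ one argues that total concentration on $L^2(M)$ would force a normal $M$-hypertrace on $\CA$ — hence, $M$ lying in its centralizer, a normal conditional expectation $\CA\to M$ compatible with the decomposition — under which the mixing complement would itself have to carry almost central unit vectors. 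In either case, after renormalizing $\xi_i'$, the bimodule $L^2(\CA)\ominus L^2(M)$ is a mixing $M$-$M$ bimodule with almost central unit vectors, so $M$ has the Haagerup property by \ref{prop (H)}. The remaining work — establishing the stability lemma (a) and making the concentration argument precise — is the technical heart of the proof; everything else is an adaptation of the proof of Theorem~\ref{thm A} with $\CA$ held fixed.
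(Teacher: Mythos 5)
Your proposal is correct in substance and relies on the same ingredients as the paper's proof of Theorem \ref{(H)} --- the semicircular-system construction of $M\subset\CA$ fed with a mixing correspondence, the stability of mixing under Connes fusion and direct sums, and the entropy estimates from \cite{DP20} and from the proof of Theorem \ref{main} --- but you traverse the cycle in the opposite order: the paper proves (i) $\Rightarrow$ (ii) $\Rightarrow$ (iii) $\Rightarrow$ (iv) $\Rightarrow$ (i), packaging the passage from vanishing entropy back to the Haagerup property into the single arrow (iv) $\Rightarrow$ (i), whereas you distribute that content over (iv) $\Rightarrow$ (iii) $\Rightarrow$ (ii) $\Rightarrow$ (i). Three remarks. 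First, you misjudge where the work lies: (ii) $\Rightarrow$ (i) is immediate, since a mixing bimodule contains no non-zero central vector, so $L^2(\CA)$ being $M$-mixing already excludes any copy of the trivial bimodule $L^2(M)$; there is no concentration of mass to rule out, and the conclusion is literally the bimodule definition of the Haagerup property recalled in the preliminaries. Second, your justification of (iv) $\Rightarrow$ (iii) by ``lower semicontinuity'' of the entropy is not literally available: $h_\varphi(\CA,\zeta)$ is defined through the modular operator of a \emph{normal faithful} hyperstate, and a weak-$*$ cluster point of the $\varphi_i$ need not be normal, so it has no Furstenberg entropy to be semicontinuous about. The correct mechanism (the one the paper uses for its arrow (iv) $\Rightarrow$ (i)) is the quantitative estimate of \cite[Theorem 6.2]{DP20}: $h_\varphi(\CA,\varphi_i)\to 0$ forces $\sum_k\Vert z_k\xi_{\varphi_i}-\xi_{\varphi_i}z_k\Vert\to 0$, which, after replacing $\varphi$ by $\sum_{k\geq 0} 2^{-k-1}\varphi^{*k}$ and using that the cyclic vectors $\xi_{\varphi_i}$ are tracial, upgrades to almost centrality against all of $M$; a weak-$*$ cluster point of the corresponding vector states is then a hypertrace. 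Third, the entropy-vanishing step in your (i) $\Rightarrow$ (iv) is not obtained by degenerating an Ornstein--Uhlenbeck (number-operator) semigroup: the paper's argument (proof of (iii) $\Rightarrow$ (v) of Theorem \ref{main}) approximates a hypertrace on $\CA$ weak-$*$ by normal faithful hyperstates $\varphi_i$ arranged to satisfy $\varphi\ast\varphi_i\leq 2\varphi_i$, and then runs a dominated-convergence estimate on the resolvents of $\Delta_{\varphi_i}$; since that argument needs the hypertrace as input, your compressed arrow (i) $\Rightarrow$ (iv) still factors through (ii) and (iii) exactly as in the paper. None of these points is a fatal gap, but each requires the indicated repair.
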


\noindent{\bf Conventions.} In this paper, we only consider separable von Neumann algebras and separable Hilbert bimodules. When $\CA$ is a von Neumann algebra, by an inclusion $M\subset\CA$, we mean $M$ can be embedding into $\CA$ as a von Neumann subalgebra (rather than just C$^*$-subalgebra). For any $\sigma$-finite von Neumann algebra $N$, we denote by $(N,L^2(N),J_N, L^2(N)_+)$ the standard form (see \cite{Ha75}) of $N$. Without special instructions, any inner product is linear in the first slot and anti-linear in the second one.

\section{Preliminaries}\label{Preliminaries}

\subsection{Hyperstates and u.c.p. maps.}\label{hyperstate}

Fix a tracial von Neumann algebra $(M, \tau )$. Following \cite{DP20}, for a  C$^*$-algebra $\CA$ such that $M\subset\CA$ and a state $\psi$ on $\CA$, we say that $\psi$ is a $\tau$-\textbf{hyperstate} if $\psi|_M =\tau$. And we say that a $\tau$-hyperstate $\psi$ is a \textbf{hypertrace} if for any $x\in M$, $x\psi=\psi x$ (i.e. $\psi(x\,\cdot\,)=\psi(\,\cdot\,x)$). 

We denote by $\mathcal{S}_\tau(\CA)$ the set of $\tau$-hyperstates on $\CA$. For $\psi\in\mathcal{S}_\tau(\CA)$, we naturally have $L^2(M,\tau)\subset L^2(\CA,\psi)$. For convention, we simply denote $L^2(M,\tau)$ by $L^2(M)$. Let $e_M\in B(L^2(\CA,\psi))$ be the orthogonal projection onto $L^2(M)$. The u.c.p. map $\CP_\psi:\CA\to B(L^2(M))$ is defined as 
$$\CP_\psi(T)=e_MTe_M, \ T\in \CA.$$
Following \cite[Proposition 2.1]{DP20}, $\psi\mapsto\CP_\psi$ is a bijection between hyperstates on $\CA$ and u.c.p. $M$-bimodular maps from $\CA$ to $B(L^2(M))$, whose inverse is $\CP \mapsto \langle\CP(\,\cdot\,)\hat{1},\hat{1}\rangle$. When $\CA$ is a von Neumann algebra, $\psi$ is normal if and only if $\CP_\psi$ is normal.

For $\psi\in\mathcal{S}_\tau(\CA)$ and $\varphi \in \mathcal{S}_\tau(B(L^2(M)))$, the convolution $\varphi \ast\psi\in \mathcal{S}_\tau(\CA)$ is defined to be the hyperstate associated with the $M$-bimodular u.c.p. map $\mathcal{P}_{\varphi}\circ \mathcal{P}_{\psi}$. And $\psi$ is said to be $\varphi$-stationary if $\varphi \ast\psi=\psi$.

Let $\varphi\in\mathcal{S}_\tau(B(L^2(M)))$ be a hyperstate. The set of $\mathcal{P}_\varphi$\textbf{-harmonic operators} is defined to be
$$\mathrm{Har}(\mathcal{P}_\varphi)=\mathrm{Har}(B(L^2(M)),\mathcal{P}_\varphi)=\{ T\in B(L^2(M))\mid \mathcal{P}_\varphi(T)=T\}. $$
The \textbf{noncommutative Poisson boundary} $\mathcal{B}_\varphi$ of $M$ with respect to $\varphi$ is defined to be the noncommutative Poisson boundary of the u.c.p. map $\mathcal{P}_\varphi$ as defined by Izumi \cite{Izu02}, that is, the Poisson boundary $\mathcal{B}_\varphi$ is the unique $\mathrm{C}^*$-algebra (a von Neumann algebra when $\varphi$ is normal) that is isomorphic, as an operator system, to the space of harmonic operators $\mathrm{Har}(\mathcal{P}_\varphi)$. And the isomorphism $\CP:\CB_\varphi\to \mathrm{Har}(\mathcal{P}_\varphi)$ is called the $\varphi$-\textbf{Poisson transform}. Since $M\subset\mathrm{Har}(\mathcal{P}_\varphi)$, $M$ can also be embedded into $\CB_\varphi$ as a subalgebra. We said that $\CB_\varphi$ is \textbf{trivial} if $\CB_\varphi=M$. For the inclusion 
$M\subset \CB_\varphi$, $\zeta:=\varphi\circ\CP\in\mathcal{S}_\tau(\CB_\varphi)$ is the \textbf{canonical $\varphi$-stationary hyperstate} on $\CB_\varphi$.

Following \cite[Proposition 2.8]{DP20}, for a normal hyperstate $\varphi\in\mathcal{S}_\tau(B(L^2(M)))$, there exists a sequence $\{z_n\}\subset M$ such that $\sum_{n=1}^{\infty}z^*_nz_n=1$, and $\varphi$ and $\CP_\varphi$ admit the following standard form:
$$\varphi(T)=\sum_{n=1}^{\infty}\langle T  \hat{z}_n^*,\hat{z}_n^*\rangle, \ \mathcal{P}_\varphi(T)=\sum_{n=1}^{\infty} (Jz_n^*J)T(Jz_nJ), \ T\in B(L^2(M)).$$
Following \cite[Proposition 2.5 and the unnumbered remark right after Proposition 2.8]{DP20}, $\varphi$ is said to be 
\begin{itemize}
\item \textbf{regular}, if $\sum_{n=1}^{\infty}\limits z_n^*z_n=\sum_{n=1}^{\infty}\limits z_nz_n^*=1$;
\item \textbf{strongly generating}, if the unital algebra (rather than the unital $\ast$-algebra) generated by $\{z_n\}$ is weakly dense in $M$.
\end{itemize}
Following \cite[Proposition 2.9]{DP20}, when $\varphi$ is a normal regular strongly generating hyperstate, the canonical hyperstate $\zeta=\varphi\circ\CP$ is a normal faithful hyperstate on $\CB_\varphi$.

\subsection{Entropy.}\label{NC entropy} 

Following \cite{DP20}, for a normal hyperstate $\varphi\in\mathcal{S}_\tau(B(L^2(M)))$, let $A_\varphi\in B(L^2(M))$ be the trace class operator associated to $\varphi$. The \textbf{entropy} of $\varphi$ is defined to be 
$$H(\varphi)=-\mathrm{Tr}(A_\varphi \log A_\varphi).$$
The \textbf{asymptotic entropy} of $\varphi$ is defined to be 
$$h(\varphi)=\lim_{n\to\infty}\frac{H(\varphi^{*n})}{n}.$$

For a von Neumann algebra $\CA$ such that $M\subset \CA$ and a normal faithful hyperstate $\zeta\in \mathcal{S}_\tau(\CA)$, let $\Delta_\zeta:L^2(\CA,\zeta)\to L^2(\CA,\zeta)$ be the modular operator of $(\CA,\zeta)$, i.e., $\Delta_\zeta=S_\zeta^*S_\zeta$, where $S_\zeta:L^2(\CA,\zeta)\to L^2(\CA,\zeta)$ is an unbounded anti-linear operator given by the closure of $S_0:a\xi_\zeta\mapsto a^*\xi_\zeta$ ($a\in \CA$, $\xi_\zeta\in L^2(\CA,\zeta)$ is the cyclic vector). For the details regarding Tomita-Takesaki theory, we refer to \cite[Chapter VI-IX]{TakII}. Let $e\in B(L^2(\CA,\zeta))$ be the orthogonal projection onto $L^2(M)$. The \textbf{Furstenberg-type entropy} of $(\CA,\zeta)$ with respect to $\varphi$ is defined to be 
$$h_\varphi(\CA,\zeta)=-\varphi(e\log\Delta_\zeta e).$$

\subsection{The standard form of von Neumann algebras.}

Let $\CA$ be a $\sigma$-finite von Neumann algebra. Following \cite{Ha75}, the standard form of $\CA$ is a quadruple $(\CA, H, J, P)$ such that $H$ is a Hilbert space with $\CA\subset B(H)$; $J:H\to H$ is an anti-unitary with $J^2=1$; $P\subset H$ is a self-dual cone, i.e., $P$'s dual cone $P^\mathrm{o}:=\{\xi\in H\mid\langle \xi,\eta\rangle\geq0, \forall \eta\in P\}$ satisfies $P^\mathrm{o}=P$; and $(\CA, H, J, P)$ satisfies the following conditions:
\begin{itemize}
    \item[(1)] $J\CA J=\CA'\cap B(H)$;
    \item[(2)] $JcJ=c^*$ for any $c\in Z(\CA)$, the center of $\CA$;
    \item[(3)] $J\xi=\xi$ for any $\xi\in P$;
    \item[(4)] $a(JaJ)P\subset P$.
\end{itemize}
The standard form is unique up to unitary equivalence: Assume that $(\CA, \tilde{H}, \tilde{J}, \tilde{P})$ is another quadruple satisfying the above conditions. Then there exists a unique unitary $u:H\to\tilde{H}$ such that 
\begin{itemize}
    \item[(a)] $\pi_H(a)=u\pi_{\tilde{H}}(a)u^*$ ($a\in \CA$), where $\pi_H:\CA \to B(H)$ and $\pi_{\tilde{H}}:\CA \to B(\tilde{H})$ are the representations of $\CA$ on $H$ and $\tilde{H}$ respectively;
    \item[(b)] $\tilde{J}=uJu^*$;
    \item[(c)] $\tilde{P}=uP$.
\end{itemize}
Therefore, the standard form of $\CA$ only depends on $\CA$ itself, and we denote it by $(\CA,L^2(\CA),J_\CA,L^2(\CA)_+)$. In particular, for any normal faithful state $\varphi$ on $\CA$, we can take
$$(\CA,L^2(\CA),J_\CA,L^2(\CA)_+)=(\CA,L^2(\CA,\varphi),J_\varphi,L^2(\CA,\varphi)_+),$$
where $J_\varphi=\Delta_\varphi^{1/2}S_\varphi=S_\varphi\Delta_\varphi^{-1/2}$ is the modular conjugation operator of $(\CA,\varphi)$ and $L^2(\CA,\varphi)_+$ is the closure of $\{a(J_\varphi aJ_\varphi)\xi_\varphi \mid a\in \CA \}$ \cite[Lemma 2.9]{Ha75}. 

The Hilbert space $L^2(\CA)$ admits a natural $\CA$-bimodule structure: $a\xi b=a(J_\CA b^* J_\CA)\xi$ ($\xi\in L^2(\CA)$, $a,b\in \CA$). Following \cite[Lemma 2.6]{Ha75}, for a orthogonal projection $p\in\CA$, we have $L^2(p\CA p)=pL^2(\CA)p$ as $p\CA p$-bimodules.

Following \cite[Lemma 2.10]{Ha75}, any normal positive functional $\omega\in\CA_*^+$ admits a unique cyclic vector $\xi_\omega\in L^2(\CA)_+$, i.e., $\omega=\langle \,\cdot\, \xi_\omega,\xi_\omega\rangle$. Moreover, $\xi  \mapsto \omega_\xi=\langle \,\cdot\, \xi,\xi\rangle $ is a homeomorphism between $L^2(\CA)_+$ and $\CA_*^+$ which satisfies that for any $\xi,\eta\in L^2(\CA)_+$,
$$\Vert \xi-\eta \Vert^2\leq \Vert \omega_\xi-\omega_\eta \Vert\leq \Vert \xi-\eta\Vert\Vert \xi+\eta \Vert . $$

\subsection{Connes fusion tensor product.}

For convenience, the inner product in this subsection is linear in the second slot and anti-linear in the first one. Following \cite{Co80} (see also \cite{AP17}), for a tracial von Neumann algebra $(M,\tau)$ and a von Neumann algebra $N$, let $_N H_M$ be a $N$-$M$-bimodule and $_M K_N$ be a $M$-$N$-bimodule. Let
$$H^0=\{\xi\in H\mid \exists c\geq 0 \mbox{ such that } \Vert\xi x\Vert\leq c\Vert x \Vert_{2,\tau} \mbox{ for any } x \in M\}$$
be the set of \textbf{left $M$-bounded vectors} in $H$. For $\xi\in H^0$, let $L_\xi: L^2(M)\to H$ be the bounded operator extended by $\hat{x}\mapsto \xi x \ (x\in M)$. For $\xi_1,\xi_2\in H^0$, let $\langle \xi_1,\xi_2\rangle_M=L_{\xi_1}^*L_{\xi_2}\in M$. Similarly, we can define the set of \textbf{right $M$-bounded vectors} in $K$ and denote it by $^0K$. And for $\eta\in {^0K}$ define $R_\eta:L^2(M)\to K$ by $R_\eta(\hat{x})=x\eta \ (x\in M)$. For $\eta_1,\eta_2\in {^0K}$, let $_M\langle\eta_1,\eta_2\rangle=R^*_{\eta_1}R_{\eta_2}\in M$. 

Then \textbf{Connes tensor fusion product} $H\otimes_M K$ is the Hilbert space deduced from the algebraic tensor product $H^0\odot {^0K}$ by separation and completion relative to sesquilinear form
$$\langle \xi_1\otimes\eta_1,\xi_2\otimes\eta_2\rangle=\langle\eta_1,\langle\xi_1,\xi_2\rangle_M\eta_2\rangle_K,$$
or equivalently,
$$\langle \xi_1\otimes\eta_1,\xi_2\otimes\eta_2\rangle=\langle \xi_1{_M\langle\eta_1,\eta_2\rangle},\xi_2\rangle_H.$$
And $H\otimes_M K$ admits a natural $N$-bimodule structure given by
$$y(\xi\otimes_M \eta)=(y\xi)\otimes_M \eta,\ (\xi\otimes_M \eta)y=\xi\otimes_M (\eta y) \ (y\in N, \xi\in H^0, \eta\in{^0K}). $$

\subsection{Jones' basic construction.}

Following \cite{Jo83} (see also \cite{AP17}), let $( M,\tau)$ be a tracial von Neumann algebra and $B\subset M$ be a von Neumann subalgebra. Let $e_B\in B(L^2( M))$ be the orthogonal projection onto $L^2(B)$. The von Neumann algebra $\langle M, e_B \rangle\subset B(L^2(M))$ generated by $M$ and $e_B$ is called the \textbf{Jones' basic construction} of $B\subset M$, which satisfies
\begin{itemize}
\item[(1)] $e_Bxe_B=E_B(x)e_B=e_BE_B(x)$ for every $x\in M$;
\item[(2)] $J_Me_B=e_B J_M$ for the modular conjugation operator $J_M$ on $L^2(M)$;
\item[(3)] $\langle M, e_B \rangle=(J_M BJ_M)'$;
\item[(4)] $\langle M, e_B \rangle=\overline{\mathrm{span}\{xe_By\mid x,y\in M\}}^\mathrm{w.o.}$;
\item[(5)] $\hat{\tau}(xe_By)=\tau(xy)$ $(x,y\in M)$ defines a normal faithful semi-finite trace $\hat{\tau}$ on $\langle M, e_B \rangle$.
\end{itemize}

The following lemma is a well-known result regarding Jones' basic construction.

\begin{lemma}\label{bimodiso}
Let $( M,\tau)$ be a tracial von Neumann algebra and $B\subset M$ be a von Neumann subalgebra. Let $e_B\in B(L^2( M))$ be the orthogonal projection onto $L^2(B)$ and $\CB=\langle  M,e_B\rangle$. Then there exists an isomorphism between the following $\CB$-bimodules:
$$_\CB L^2(\CB)_\CB \cong {_\CB L^2( M)_B} \bigotimes\limits_{B} {_B L^2( M)_\CB}.$$
Note that $\CB=\langle  M,e_B\rangle=(J_M B J_M)'\subset B(L^2(M))$. Hence $L^2( M)$ can be naturally considered as a $\CB$-$B$ bimodule. And by considering the right action of $\CB$ on $L^2( M)$ given by $J_ M \langle  M,e_B\rangle J_ M$, $L^2( M)$ can also be considered as a $B$-$\CB$ bimodule.
\end{lemma}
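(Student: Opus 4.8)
The plan is to exhibit an explicit isomorphism on a dense subspace and check it intertwines the two bimodule actions. I would take as the dense subspace of the right-hand side the algebraic span of elementary tensors $x e_B \otimes_B e_B y$ with $x, y \in M$, using that $L^2(M)$ as a $\CB$-$B$ bimodule is generated (via the $\CB$-action) by $\what{e_B \cdot 1}$, i.e.\ by the vector $\what{e_B}$; concretely, $x e_B \cdot \what{1}$ ranges over a dense subset of $L^2(M)$. On the left-hand side I would use the description in item (4) of Jones' basic construction: $L^2(\CB)$ contains $\overline{\mathrm{span}\{x e_B y \mid x,y\in M\}}$ as a dense subspace (with respect to $\what\tau$; one should record that $\|x e_B y\|_{2,\what\tau}^2 = \tau(y^* E_B(x^*x) y)$ is finite for $x,y \in M$). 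The candidate map is then
$$U: x e_B \otimes_B e_B y \longmapsto \what{x e_B y} \in L^2(\CB,\what\tau).$$

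First I would check that $U$ is well-defined and isometric by comparing inner products. On the right, using the formula for the $B$-valued inner product and items (1)--(2), one computes $\lag \xi_1, \xi_2\rag_B$ for $\xi_i = x_i e_B \cdot$, getting something like $E_B(x_1^* x_2)$ (after identifying $L^2(M)^0$-vectors correctly), so that
$$\lag x_1 e_B \otimes_B e_B y_1,\ x_2 e_B \otimes_B e_B y_2\rag = \tau\big(y_1^* E_B(x_1^* x_2) y_2\big).$$
On the left, by item (5) (the trace $\what\tau$) together with items (1) and (2), $\lag \what{x_1 e_B y_1}, \what{x_2 e_B y_2}\rag_{\what\tau} = \what\tau(y_2^* e_B x_2^* x_1 e_B y_1) = \what\tau(y_2^* E_B(x_2^* x_1) e_B y_1) = \tau(y_2^* E_B(x_2^* x_1) y_1)$, which matches (after the appropriate conjugation bookkeeping). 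Hence $U$ extends to an isometry between the completions, and it is surjective because its range contains the dense subspace $\mathrm{span}\{x e_B y\}$. Then I would verify $\CB$-bimodularity: for $T \in \CB$, $T\cdot(x e_B \otimes_B e_B y) = (Tx e_B)\otimes_B e_B y \mapsto \what{T x e_B y} = T\cdot\what{x e_B y}$, and for the right action one uses that the right $\CB$-action on the second leg $L^2(M)$ is $J_M \CB J_M$, together with item (2) ($J_M e_B = e_B J_M$) and the standard identity $\what{a}\cdot(J_M b J_M) = \what{ab}$ in $L^2(\CB)$, to see the right action is intertwined as well.

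The main obstacle is the bookkeeping around \emph{which} copy of $L^2(M)$ carries which one-sided bounded-vector structure and getting the left/right and linear/antilinear conventions consistent — the excerpt's Connes-fusion subsection deliberately flips the inner-product convention, so I would need to track that carefully when identifying $L^2(M)^0$, the operators $L_\xi$, and the $B$-valued inner products $\lag\cdot,\cdot\rag_B$. A secondary technical point is confirming that the elementary tensors $x e_B \otimes_B e_B y$ really do span a dense subspace of the fusion product: this follows because $\{x e_B \cdot \what{1} : x \in M\}$ is already dense in $L^2(M)$ (as $e_B \cdot \what{1} = \what{1}$ is cyclic for the left $M$-action, hence for $\CB$), so the left-bounded vectors of the form $x e_B$ are dense among left-bounded vectors in the relevant sense, and symmetrically on the right; once density on both legs is in hand, density of the elementary tensors in $H \otimes_B K$ is standard. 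Everything else is a routine verification once the conventions are pinned down.
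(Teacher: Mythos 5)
Your proposal is correct and follows essentially the same route as the paper: the map $\hat{x}\otimes_B\hat{y}\mapsto\widehat{xe_By}$ (your $xe_B\otimes_B e_By$), isometry via the computation $\hat{\tau}\bigl((x_1e_By_1)^*(x_2e_By_2)\bigr)=\tau\bigl(y_1^*E_B(x_1^*x_2)y_2\bigr)$ matching the $B$-valued inner product, surjectivity from density of $\mathrm{span}\{xe_By\}$ in $L^2(\CB,\hat{\tau})$, and $\CB$-bimodularity checked on the generators $M$ and $e_B$ using $J_Me_BJ_M=e_B$. The only caveat is that the conjugation/convention bookkeeping you defer should be carried out, but it works out exactly as you anticipate.
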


\begin{proof}
For convenience, any inner product in this proof is linear in the second slot and anti-linear in the first one.

Note that there exists a normal faithful semi-finite trace $\hat{\tau}$ on $\CB$ satisfying
\begin{equation}\label{hattau}
\hat{\tau}(xe_By)=\tau(xy) \ (x,y\in M).
\end{equation}
Following \cite[Theorem 2.3]{Ha75}, we may assume $L^2(\CB)=L^2(\CB,\hat{\tau})$. Let $\CB_0=\{z\in \CB \mid \hat{\tau}(z^*z)<+\infty\}$. Denote by $\hat{z}\in L^2(\CB)$ the image of $z\in \CB_0$ in the semi-cyclic representation of $\hat{\tau}$. Let $J_\CB $ be the conjugate operator of $\CB$. Since $\hat{\tau}$ is a semi-finite trace, for $z\in \CB_0$ and $a,b\in \CB$, we have 
$$a(J_\CB b^*J_\CB )\hat{z}=\widehat{azb}.$$

Let $\xi \in L^2( M)$ be the $\tau$-cyclic vector and $E_B: M \to B$ the $\tau$-preserving conditional expectation. Define $\Phi_0$: $ M\xi\odot  M\xi\to L^2(\CB)$ by 
$$\Phi_0(a\xi\odot b\xi)=\widehat{ae_Bb}\ (a,b\in  M).$$ 
For any $a_i,b_i\in  M$ $(i=1,2)$, we have
$$\langle b_1\xi, \langle a_1\xi,a_2\xi\rangle_B b_2\xi \rangle=\langle b_1\xi, E_B(a_1^*a_2) b_2\xi \rangle=\tau(b_1^*E_B(a_1^*a_2) b_2).$$
By (\ref{hattau}) and $E_B(a)e_B=e_Bae_B$ $(a\in  M)$ in $\langle  M,e_B\rangle$, we further have 
\begin{align*}
    \tau(b_1^*E_B(a_1^*a_2) b_2)&=\hat{\tau}(b_1^*E_B(a_1^*a_2)e_Bb_2)\\
    &=\hat{\tau}((a_1e_Bb_1)^*(a_2e_Bb_2))=\langle \Phi_0(a_1\xi\odot b_1\xi), \Phi_0(a_2\xi\odot b_2\xi) \rangle.
\end{align*}
Therefore, we have $\langle b_1\xi, \langle a_1\xi,a_2\xi\rangle_B b_2\xi \rangle=\langle \Phi_0(a_1\xi\odot b_1\xi), \Phi_0(a_2\xi\odot b_2\xi) \rangle$ and $\Phi_0$ induces an isometry $\Phi: L^2( M)\otimes_B L^2( M) \to L^2(\CB)$ defined by 
$$\Phi(a\xi\otimes b\xi)= \widehat{ae_Bb} \ (a,b\in  M).$$

Since $\mathrm{span}\{ae_Bb\mid a,b\in M\}$ is weakly dense in $\CB=\langle M,e_B\rangle$, the image of $\Phi$ is dense in $L^2(\CB)$. Also, for $\Phi$ being an isometry, we know that $\Phi$ must be an isomorphism between Hilbert spaces.

Obviously, $\Phi$ is $ M$-bimodular. To prove it is $\langle M,e_B\rangle$-bimodular, we only need to prove $\Phi(e_Ba\xi\otimes b\xi)=\widehat{e_Bae_Bb}$ and $\Phi(a\xi\otimes J_ M e_B^*J_ M b\xi)=\widehat{ae_Bbe_B}$.

For $a,b\in  M$, by $e_Ba\xi=E_B(a)\xi$ in $L^2(B)$ and $e_Bae_B=E_B(a)e_B=e_BE_B(a)$ in $\langle M,e_B\rangle$, we have
$$\Phi(e_Ba\xi\otimes b\xi)=\Phi(E_B(a)\xi\otimes b\xi)=\widehat{E_B(a)e_Bb}=\widehat{e_Bae_Bb}.$$
Also note that $J_ M e_B^*J_ M =e_B$ in $B(L^2( M))$, then we have
$$\Phi(a\xi\otimes J_ M e_B^*J_ M b\xi)=\Phi(a\xi\otimes E_B(b)\xi)=\widehat{ae_B E(b)}=\widehat{ae_Bbe_B}.$$

Therefore, $\Phi$ is a $\langle M,e_B\rangle$-bimodular isomorphism.
\end{proof}

\subsection{Weakly mixing bimodules and property (T).}\label{def (T)} 

Let $M$ be a von Neumann algebra and $H$ be a $M$-bimodule. The \textbf{contragredient $M$-bimodule of $H$} is a $M$-bimodule $\bar{H}$ satisfying that $\bar{H}=\{\bar{\xi}\mid \xi\in H\}$ is the conjugate Hilbert space of $H$ and the $M$-bimodule structure is given by $x\bar{\xi}y=\overline{y^*\xi x^*}$ ($\bar{\xi}\in\bar{H}$, $x,y\in M$). Following \cite{PS12} and \cite{Bo14}, the $M$-bimodule $H$ is said to be \textbf{left weakly mixing} if it satisfies the following equivalent conditions:
\begin{itemize}
    \item [(1)] The $M$-bimodule $H\otimes_M \bar{H}$ contains no non-zero $M$-central vector;
    \item [(2)] There exists a sequence $(u_n)\subset\mathcal{U}(M)$ such that for any $\xi,\eta\in H$,
    $$\lim_n \sup_{y\in (M)_1} |\langle u_n\xi y,\eta\rangle|=0.$$
\end{itemize}
Let $H$ be a left weakly mixing $M$-bimodule and $(u_n)$ be as in condition (2). Then for any $\xi,\eta\in H$, we have $\lim_n  |\langle u_n\xi u_n^*,\eta\rangle|=0$. Therefore, $H$ contains no non-zero $M$-central vector.

Similarly, we can define the \textbf{right weakly mixing} $M$-bimodules. We say that $H$ is \textbf{weakly mixing} if it is both left and right weakly mixing. Following \cite[Proportion 2.4]{PS12}, for any weakly mixing $M$-bimodule $H$ and $n\geq 1$, $H^{\otimes_M^n}$ is still weakly mixing. 

The notion of property (T) is firstly introduced for locally compact groups in \cite{Ka67} and then extended to tracial von Neumann algebras in \cite{CJ85}. Let $M$ be a $\mathrm{II}_1$ factor and $H$ be a $M$-bimodule. A vector $\xi\in H$ is \textbf{$M$-central} if $x\xi=\xi x$ for any $x\in M$. A sequence $(\xi_n)\subset H$ is \textbf{$M$-almost central} if $\lim_n \Vert x\xi-\xi x \Vert=0$ for any $x\in M$. Then $M$ has \textbf{property (T)} if any $M$-bimodule admitting almost central unit vectors contains a non-zero central vector. 

Recently, Tan \cite{Ta23} proved that a separable $\mathrm{II}_1$ factor does not have property (T) if and only if it admits a weakly mixing bimodule which has almost central unit vectors.

\subsection{Mixing bimodules and Haagerup property.}

Following \cite{PS12}, for a von Neumann algebra $M$, a $M$-bimodule $H$ is said to be \textbf{left mixing} if for every sequence $(u_n)\subset\mathcal{U}(M)$ such that $u_n\to 0$ weakly, we have
$$\lim_n \sup_{y\in (M)_1} |\langle u_n\xi y,\eta\rangle|=0.$$

Similarly, we can define the \textbf{right mixing} $M$-bimodules. We said that $H$ is \textbf{mixing} if it is both left and right mixing.

For the same reason in the weakly mixing case, any mixing bimodule has no non-zero central vector. Let $H$ be a left mixing $M$-bimodule and $K$ be a $M$-bimodule. Then for any non-zero vectors $\xi_1,\xi_2\in H^0$, $\eta_1,\eta_2\in {^0K}$ and $(u_n)\subset\mathcal{U}(M)$ such that $u_n\to 0$ weakly, we have
$$\begin{aligned}
&\lim_n \sup_{y\in (M)_1} |\langle u_n(\xi_1\otimes\eta_1) y,\xi_2\otimes\eta_2\rangle|\\
=&\lim_n \sup_{y\in (M)_1} |\langle u_n\xi_1{_M\langle \eta_1y,\eta_2\rangle},\xi_2\rangle|\\
\leq&\lim_n \sup_{z\in (M)_1} \Vert R_{\eta_1}\Vert\Vert R_{\eta_2}\Vert|\langle u_n\xi_1z,\xi_2\rangle|\\
=&0,
\end{aligned}$$ 
where the inequality holds for taking $z=\Vert R_{\eta_1}\Vert^{-1}\Vert R_{\eta_2}\Vert^{-1}{_M\langle \eta_1y,\eta_2\rangle}\in (M)_1$.
Therefore, $H\otimes_M K$ is still left mixing. Similarly, when $H$ is a right mixing $M$-bimodule and $K$ is a $M$-bimodule, $K\otimes_M H$ is still right mixing. In particular, when $H$ is mixing, $H^{\otimes_M^n}$ is still mixing for any $n\geq 1$.

The notion of Haagerup property is firstly introduced for locally compact groups in \cite{Ha79} and then extended to tracial von Neumann algebras in \cite{Ch83}. Following \cite{BF11}, \cite{OOT17} and \cite{DKP22}, a $\mathrm{II}_1$ factor $M$ has the \textbf{Haagerup property} if it admits a mixing $M$-bimodule which has almost central unit vectors.

\subsection{Shlyakhtenko's $M$-valued semicircular system.}
Following \cite{Sh99}, let $(M,\tau)$ be a tracial von Neumann algebra and $(H,J)$ be a symmetric $M$-bimodule, i.e., $J:H\to H$ is an anti-unitary operator satisfying $J^2=1$ and $J(x\xi y)=y^*J(\xi)x^*$ for any $\xi\in H$ and $x,y\in M$. Define the \textbf{full Fock space} of $H$ by
$$\mathcal{F}_M(H)=L^2(M)\oplus \bigoplus_{n=1}^{\infty}H^{\otimes^n_M}.$$
Then \textbf{Shlyakhtenko's $M$-valued semicircular system} of $H$ is a tracial von Neumann algebra $(\tilde{M},\tilde{\tau})$ such that $M\subset\tilde{M}\subset B(\mathcal{F}_M(H))$ and $L^2(\tilde{M},\tilde{\tau})=\mathcal{F}_M(H)$ as $M$-bimodules with $\hat{1}\in L^2(M)\subset\mathcal{F}_M(H)$ as the $\tilde{\tau}$-cyclic vector.

The condition of $H$ being symmetric is the key for $\tilde{M}$ to be tracial.
For simplicity, we omit details of the construction of $(\tilde{M},\tau)$ here and refer to \cite{Sh99} for interested readers. The only result that will be used in this paper is the fact that $L^2(\tilde{M},\tilde{\tau})=\mathcal{F}_M(H)$ as $M$-bimodules.

\section{Hyperstates, hypertraces and noncommutative boundary map}

In this section, we will prove several lemmas regarding hyperstates and hypertrace, preparing for the proofs of main theorems. We also develop the noncommutative Furstenberg boundary map (Theorem \ref{NC boundary map}), as another application of Lemma \ref{hypertrace conditional expectation}.

The following lemma explains the relationship between hypertraces and conditional expectations.
\begin{lemma}\label{hypertrace conditional expectation}
Let $(M,\tau)$ be a tracial von Neumann algebra and $\CA$ be a von Neumann algebra such that $M\subset \CA$. Then a hyperstate $\varphi\in \CS_\tau(\CA)$ is a (normal) hypertrace if and only if the associated u.c.p. map $\CP_\varphi: \CA\to B(L^2(M))$ is a (normal) conditional expectation from $\CA$ to $M$.
\end{lemma}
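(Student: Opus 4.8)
The plan is to recast the statement as a question purely about the range of $\CP_\varphi$. First note that $\CP_\varphi(T)=e_MTe_M$ is automatically unital and u.c.p., and it is $M$-bimodular: the left $M$-action on $L^2(\CA,\varphi)$ is a $\ast$-representation leaving the subspace $L^2(M)$ invariant, hence also leaving its orthogonal complement invariant, so $e_M$ commutes with every $x\in M$ and $\CP_\varphi(xTy)=e_MxTye_M=x(e_MTe_M)y$. Being unital and $M$-bimodular, $\CP_\varphi$ restricts to $\id_M$ on $M$, and therefore $\CP_\varphi$ is a conditional expectation of $\CA$ onto $M$ if and only if its range is contained in $M\subset B(L^2(M))$. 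So it suffices to prove: $\varphi$ is a hypertrace $\iff$ $\CP_\varphi(\CA)\subseteq M$.

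The implication ``$\CP_\varphi(\CA)\subseteq M$ $\Rightarrow$ $\varphi$ a hypertrace'' is routine: for $x\in M$, $T\in\CA$, bimodularity gives $\varphi(xT)=\lag x\CP_\varphi(T)\hat1,\hat1\rag=\tau(x\CP_\varphi(T))$ and $\varphi(Tx)=\lag\CP_\varphi(T)x\hat1,\hat1\rag=\tau(\CP_\varphi(T)x)$, where we used $\CP_\varphi(T)\in M$ to evaluate the vector state on $L^2(M)$; these agree by the traciality of $\tau$.

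The substantive direction is ``$\varphi$ a hypertrace $\Rightarrow$ $\CP_\varphi(\CA)\subseteq M$''. Since $M$ acts standardly on $L^2(M)$, one has $(J_MMJ_M)'=M$, so it is enough to show that $e_MTe_M$ commutes with $J_MnJ_M$ for every $n\in M$ and $T\in\CA$. I would test the two compositions against vectors $\hat x,\hat y$ with $x,y\in M$ (these span a dense subspace of $L^2(M)$, on which $e_M$ acts as the identity), using $J_MnJ_M\hat x=\widehat{xn^*}$, $(J_MnJ_M)^*\hat y=\widehat{yn}$, and $T\widehat{xn^*}=\widehat{Txn^*}$ inside $L^2(\CA,\varphi)$. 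The first composition yields $\lag e_MTe_M\,J_MnJ_M\,\hat x,\hat y\rag=\varphi(y^*Txn^*)$, the second yields $\lag J_MnJ_M\,e_MTe_M\,\hat x,\hat y\rag=\varphi(n^*y^*Tx)$, and these coincide by the hypertrace identity $\varphi(mS)=\varphi(Sm)$ applied with $m=n^*\in M$ and $S=y^*Tx\in\CA$. Hence $e_MTe_M\in(J_MMJ_M)'=M$, and $\CP_\varphi$ is a conditional expectation onto $M$.

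The step I would flag as the main obstacle — more of a pitfall than a real difficulty — is that $\varphi$ need not be normal or faithful, so $(\CA,\varphi)$ carries no modular theory and there is no right $M$-action on all of $L^2(\CA,\varphi)$; the argument must therefore use only the modular conjugation $J_M$ of the trace $\tau$ on the subspace $L^2(M)\subset L^2(\CA,\varphi)$, which is why the commutation is checked on vectors coming from $M$ rather than via a would-be $J_\CA$. Once the bookkeeping is organized this way, the hypertrace relation is exactly the relation needed to place $e_MTe_M$ in the commutant of $J_MMJ_M$.
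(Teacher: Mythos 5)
Your proof is correct and takes essentially the same route as the paper: both directions come down to the observation that the hypertrace identity, tested against vectors $\hat x,\hat y$ with $x,y\in M$, is exactly the statement that $e_MTe_M$ commutes with $J_MMJ_M$, whose commutant is $M$. The only cosmetic difference is that the paper runs the commutation argument through unitaries $u\in\mathcal{U}(M)$ (obtaining $\CP_\varphi(T)=(JuJ)\CP_\varphi(T)(Ju^*J)$) and proves the converse via $\hat u=(Ju^*J)\hat 1$, where you check commutation with $J_MnJ_M$ for general $n$ and invoke the traciality of $\tau$ directly.
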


\begin{proof}
Fix a hyperstate $\varphi\in \CS_\tau(\CA)$. Assume that $\varphi$ is a hypertrace. Since we already have $\CP_\varphi(x)=x$ for any $x\in M$, we only need to prove that $\CP_\varphi(T)\in M$ for any $T\in \CA$. Fix a $T\in \CA$. For any $u\in \mathcal{U}(M)$ and $a,b\in M$, we have
$$\langle\CP_\varphi(T)\hat{a},\hat{b}\rangle=\varphi(b^*Ta)=\varphi(u^*b^*Tau)=\langle\CP_\varphi(T)\widehat{au},\widehat{bu}\rangle=\langle(JuJ)\CP_\varphi(T)(Ju^*J)\hat{a},\hat{b}\rangle.$$
Therefore, $\CP_\varphi(T)=(JuJ)\CP_\varphi(T)(Ju^*J)$ for any $u\in \mathcal{U}(M)$. Hence $\CP_\varphi(T)\in M$ for any $T\in \CA$ and $\CP_\varphi: \CA\to M$ is a conditional expectation.

Assume that $\CP_\varphi$ is a conditional expectation from $\CA$ to $M$. Then for any $T\in \CA$, we have $\CP_\varphi(T)\in M$. For any $u\in \mathcal{U}(M)$ and $T\in\CA$, we have 
$$\varphi(u^*Tu)=\langle\CP_\varphi(T)\hat{u},\hat{u}\rangle=\langle(JuJ)\CP_\varphi(T)(Ju^*J)\hat{1},\hat{1}\rangle=\langle\CP_\varphi(T)\hat{1},\hat{1}\rangle=\varphi(T).$$
Hence $u^*\varphi u=\varphi$ for any $u\in \mathcal{U}(M)$ and $\varphi$ is a hypertrace.

Moreover, by \cite[Proposition 2.1]{DP20}, the hyperstate $\varphi\in \CS_\tau(\CA)$ is normal if and only if $\CP_\varphi$ is normal. Therefore, $\varphi$ is a normal hypertrace if and only if $\CP_\varphi: \CA\to M$ is a normal conditional expectation.
\end{proof}

Before applying Lemma \ref{hypertrace conditional expectation} to the proof of main theorems, we would like to present another application of Lemma \ref{hypertrace conditional expectation}, the noncommutative Furstenberg boundary map.

Let's recall the classical Furstenberg boundary map\cite{Fu63,Fu63a}: Let $\Gamma$ be a countable discrete group, $\mu\in\mathrm{Prob}(\Gamma)$ be a generating measure and $(B,\nu_B)$ be the $(\Gamma,\mu)$-Poisson boundary. Then the following facts hold:
\begin{itemize}
    \item [(a)] For any compact metrizable $(\Gamma,\mu)$-space $(X,\nu)$, there exists an (essentially) unique $\Gamma$-equivalent measurable map $\beta_\nu:B\to\mathrm{Prob}(X)$ such that $\int_B \beta_\nu(b) \d \nu_B(b)=\nu$. The map $\beta_\nu:B\to\mathrm{Prob}(X)$ is usually called \textbf{Furstenberg's boundary map};
    \item[(b)] For any compact metrizable $(\Gamma,\mu)$-space $(X,\nu)$, $(X,\nu)$ is a $(\Gamma,\mu)$-boundary (i.e. $\Gamma$-equivalent measurable factor of $(B,\nu_B)$) if and only if for $\nu_B$-a.e. $\!b\in B$, $\beta_\nu(b)=\delta_{\pi(b)}\in\mathrm{Prob}(X)$ is a Dirac mass;
    \item[(c)] The Poisson boundary $(B,\nu_B)$ is trivial (i.e. $(B,\nu_B)=\{\ast\}$) if and only if for any compact metrizable $\Gamma$-space $X$, any $\mu$-stationary Borel probability measure $\nu\in\mathrm{Prob}(X)$ is $\Gamma$-invariant.
\end{itemize}

Furstenberg's boundary map $\beta_\nu:B\to\mathrm{Prob}(X)$ can be equivalently regarded as the $\Gamma$-equivalent u.c.p. map $\hat{\beta}: C(X)\to L^\infty(B)$: $f\mapsto(b\mapsto\beta_b(f))$. And $\beta_b=\beta_\nu(b)$ is a Dirac mass for $\nu_B$-a.e. $\!b\in B$ if and only if $\hat{\beta}$ is a $*$-homomorphism. Inspired by these facts, we are able to develop the noncommutative Furstenberg's boundary map.

For a tracial von Neumann $(M,\tau)$, a hyperstate $\varphi\in \CS_\tau(B(L^2(M)))$ and a C$^*$-algebra $\CA$ with $M\subset\CA$, we denote the set of $\varphi$-stationary hyperstates on $\CA$ by $\CS_\varphi(\CA)$. For any operator system $\mathcal{C}$ with $M\subset \mathcal{C}$, we denote the set of $M$-bimodular u.c.p. maps from $\CA$ to $\mathcal{C}$ by $\UCP_M(\CA,\mathcal{C})$. Following \cite[Definition 3.7]{Zh23}, up to state preserving isomorphisms, a \textbf{$\varphi$-boundary} is a von subalgebra $(\CB_0,\zeta_0)$ of the $\varphi$-Poisson boundary $(\CB_\varphi,\zeta)$ such that $(M,\tau)\subset(\CB_0,\zeta_0)\subset(\CB_\varphi,\zeta)$. 

\begin{theorem}\label{NC boundary map}
Let $(M,\tau)$ be a tracial von Neumann algebra, $\varphi\in \CS_\tau(B(L^2(M)))$ be a normal regular strongly generating hyperstate and $(\CB_\varphi,\zeta)$ be the $\varphi$-Poisson boundary with canonical hyperstate. Then
\begin{itemize}
    \item [(1)] For any $\mathrm{C}^*$-algebra $\CA$ with $M\subset \CA$, there is a bijection between $\UCP_M(\CA,\CB_\varphi)$ and $\CS_\varphi(\CA)$, which is given by $\Psi\mapsto \zeta\circ\Psi$.
    \item[(2)] For a von Neumann algebra $\CB_0$ with $M\subset\CB_0$ and a  normal faithful $\varphi$-stationary hyperstate $\zeta_0\in \CS_\varphi(\CB_0)$, $(\CB_0,\zeta_0)$ is a $\varphi$-boundary if and only if the associated u.c.p. map $\Psi_{\zeta_0}:\CB_0\to\CB_\varphi$ (i.e. $\zeta_0=\zeta\circ\Psi_{\zeta_0}$) is a $*$-homomorphism.
    \item[(3)] $\CB_\varphi$ is trivial (i.e. $\CB_\varphi=M$) if and only if for any $\mathrm{C}^*$-algebra $\CA$ with $M\subset \CA$, any $\varphi$-stationary hyperstate $\psi$ on $\CA$ must be a $(M,\tau)$-hypertrace.
\end{itemize}
\end{theorem}
\begin{proof}
    (1) Following \cite[Proposition 2.1]{DP20}, there is a bijection between $\CS_\tau(\CA)$ and $\UCP_M(\CA,B(L^2(M)))$ given by $\psi\mapsto \CP_\psi$. It is easy to see that $\psi\in \CS_\tau(\CA)$ is $\varphi$-stationary (i.e. $\CP_\varphi\circ\CP_\psi=\CP_\psi$) if and only if $\CP_\psi(\CA)\subset\har(\CP_\varphi)$. Hence this bijection maps $\CS_\varphi(\CA)$ to $\UCP_M(\CA,\har(\CP_\varphi))$. Let $\CP:\CB_\varphi\to\har(\CP_\varphi)$ be the Poisson transform. Then $\psi\mapsto \Psi_\psi:=\CP^{-1}\circ\CP_\psi$ is a bijection between $\CS_\varphi(\CA)$ and $\UCP_M(\CA,\CB_\varphi)$. Moreover, since for any $\psi\in\CS_\varphi(\CA)$,
    $$\zeta\circ\Psi_\psi=(\zeta\circ\CP^{-1})\circ\CP_\psi=\varphi\circ \CP_\psi=\langle \CP_\varphi\circ\CP_\psi(\,\cdot\,)\hat{1},\hat{1}\rangle=\varphi\ast\psi=\psi,$$
    we know that $\Psi\mapsto \zeta\circ\Psi$ is exactly the inverse of $\psi\mapsto \Psi_\psi$. Therefore, $\Psi\mapsto \zeta\circ\Psi$ is a bijection between $\UCP_M(\CA,\CB_\varphi)$ and $\CS_\varphi(\CA)$.

    (2) If $(\CB_0,\zeta_0)$ is a $\varphi$-boundary, let $\Phi:(\CB_0,\zeta_0)\to (\CB_\varphi,\zeta)$ be the state preserving embedding. Then by the uniqueness of the u.c.p. map $\Psi$ in (1), we must have $\Psi_{\zeta_0}=\Phi$. Therefore, $\Psi_{\zeta_0}:\CB_0\to\CB_\varphi$ is a $*$-homomorphism.

    Assume that $\Psi_{\zeta_0}$ is a $*$-homomorphism. Since $\Psi_{\zeta_0}:(\CB_0,\zeta_0)\to(\CB_\varphi,\zeta)$ preserves the normal faithful state $\zeta_0$, by \cite[Proposition 2.5.11]{AP17}, $\Psi_{\zeta_0}$ is normal and faithful. And by \cite[Proposition 2.5.12]{AP17}, $\Psi_{\zeta_0}(\CB_0)$ is a von Neumann subalgebra of $\CB_\varphi$. Therefore, $\Psi_{\zeta_0}:(\CB_0,\zeta_0)\to(\CB_\varphi,\zeta)$ is a state preserving normal embedding. Hence $(\CB_0,\zeta_0)$ is a $\varphi$-boundary by \cite[Definition 3.7]{Zh23}. 

    (3) $\Rightarrow$: Let $\CA$ be a C$^*$-algebra with $M\subset\CA$ and $\psi\in \CS_\tau(\CA)$ be a $\varphi$-stationary hyperstate. Then by the proof of (1), we must have $\CP_\psi\in \UCP_M(\CA,\har(\CP_\varphi))=\UCP_M(\CA,M)$. Therefore, $\CP_\psi$ is a conditional expectation onto $M$. By Lemma \ref{hypertrace conditional expectation}, we know that $\psi$ must be a $(M,\tau)$-hypertrace.

    $\Leftarrow$: Take $\CA=\CB_\varphi$. Since $\zeta\in\CS_\tau(\CB_\varphi)$ is $\varphi$-stationary, it must be a $(M,\tau)$-hypertrace. By Lemma \ref{hypertrace conditional expectation}, the u.c.p. map $\CP_\zeta:\CB_\varphi\to B(L^2(M))$ is a condition expectation onto $M$. 
    
    By the proof of (1), $\Psi_\zeta=\CP^{-1}\circ\CP_\zeta$ satisfies $\zeta=\zeta\circ\Psi_\zeta=\zeta\circ\mathrm{id}_{\CB_\varphi}$. By the bijection in (1), we must have $\CP^{-1}\circ\CP_\zeta=\mathrm{id}_{\CB_\varphi}$. Hence $\CP_\zeta=\CP:\CB_\varphi\to \har(\CP_\varphi)$, which is an isomorphism between operator systems. Therefore, we must have $\har(\CP_\varphi)=M$ and $B_\varphi$ is trivial. 
\end{proof}

The following lemma shows the existence of normal faithful hyperstate for the inclusion $M\subset \CA$, which we will use later.
\begin{lemma}\label{faithful}
Let $(M,\tau)$ be a separable $\mathrm{II}_1$ factor and $H$ be a separable Hilbert space such that $M\subset B(H)$. Then there exists a normal faithful hyperstate on $B(H)$.
\end{lemma}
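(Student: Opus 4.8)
The plan is to build the desired hyperstate by hand, starting from the standard hyperstate coming from the embedding $L^2(M)\subset H$ and correcting it to a faithful one. First I would pick a unit vector $\xi_0\in L^2(M)\subset H$, namely $\xi_0=\hat 1$, which gives a $\tau$-hyperstate $\omega_{\xi_0}=\langle\,\cdot\,\xi_0,\xi_0\rangle$ on $B(H)$; by the Conventions and \ref{hyperstate} this restricts to $\tau$ on $M$. This state need not be faithful on $B(H)$, but it is normal. To repair faithfulness I would exploit that $H$ is separable: choose a sequence of unit vectors $(\eta_k)_{k\ge 1}$ that is total in $H$ and such that the normal state $\rho=\sum_k 2^{-k}\omega_{\eta_k}$ is faithful on $B(H)$. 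The issue is that $\rho$ is not a hyperstate — its restriction to $M$ is some normal state $\psi$ on $M$, not necessarily $\tau$.

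The key step is to average $\rho$ over the unitary group of $M$ in a way that forces the $M$-restriction to become the trace while keeping normality and faithfulness. Concretely, I would use that $M$ is a $\mathrm{II}_1$ factor, hence amenable as a... no: instead I would use the following device. Consider the normal u.c.p. map $\Phi: B(H)\to B(L^2(M))$, $\Phi(T)=e_M T e_M$, where $e_M$ is the projection onto $L^2(M)\subset H$ (here $M\subset B(H)$ acts on $L^2(M)$ via the standard representation, which is the restriction of the given action, since $M\hookrightarrow B(H)$ and $L^2(M)$ is an invariant subspace for $M$ — if the given inclusion $M\subset B(H)$ does not literally contain $L^2(M)$, one first replaces $H$ by $H\oplus L^2(M)$, which is harmless for the existence of a normal faithful hyperstate, as a faithful one on the larger algebra restricts to one on the corner). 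Then $\Phi$ is $M$-bimodular and normal. Now take any normal faithful state $\lambda$ on $B(H)$ whose restriction to $M$ is $\tau$: such a state can be obtained as follows. Start with $\rho$ faithful normal on $B(H)$ with $\rho|_M=\psi$. Since $M$ is a $\mathrm{II}_1$ factor, $\psi$ and $\tau$ are both normal faithful states on $M$, so there is a nonsingular positive operator $h$ affiliated to $M$ with $\tau=\psi(h^{1/2}\cdot h^{1/2})$ after a bounded approximation; more cleanly, approximate and use that $\{$faithful normal states on $B(H)$ restricting to $\tau\}$ is nonempty by a direct construction: pick an orthonormal basis $(\zeta_n)$ of $L^2(M)$ built from $\|\cdot\|_2$-limits of elements of $M$ and a unit basis $(\theta_m)$ of $H\ominus L^2(M)$, and define $\varphi=\sum_n \tfrac{1}{2}\,\tau(e_{n}\,\cdot)\!\ldots$ — more simply, set $\varphi(T)=\tfrac12\,\mathrm{Tr}(A_\tau\, e_M T e_M)+\tfrac12\sum_m 2^{-m}\langle T\theta_m,\theta_m\rangle$ where $A_\tau$ is the density on $L^2(M)$ of a normal faithful hyperstate on $B(L^2(M))$ — and such a normal faithful hyperstate on $B(L^2(M))$ exists by \ref{hyperstate} (e.g. any regular strongly generating normal hyperstate, which exists since $M$ is a separable $\mathrm{II}_1$ factor, hence has a weakly dense unital subalgebra generated by a sequence $\{z_n\}$ with $\sum z_n^*z_n=\sum z_nz_n^*=1$).

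So the main construction reduces to: (1) exhibit a normal faithful hyperstate $\varphi_0$ on $B(L^2(M))$, then (2) transport it to $B(H)$ by $\varphi = \tfrac12\,\varphi_0\circ\Phi + \tfrac12\,\rho_0$, where $\Phi(T)=e_MTe_M$ and $\rho_0$ is a normal faithful state on $B(H)$ supported on $(1-e_M)$ — the first summand is a normal hyperstate (its $M$-restriction is $\tau$ since $\varphi_0$ is and $\Phi$ is $M$-bimodular, but one must scale correctly: actually $\varphi_0\circ\Phi|_M = \varphi_0|_M=\tau$ exactly, with no scaling, since $\Phi(x)=x$ for $x\in M$), faithful on $e_M B(H) e_M$ but killing the orthogonal complement; adding a normal faithful state on the complementary corner and renormalizing the $M$-restriction back to $\tau$ gives the result. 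For step (1), the cleanest route is to take $\varphi_0=\sum_{n}\langle\,\cdot\,\hat z_n^*,\hat z_n^*\rangle$ for a regular strongly generating family $\{z_n\}$; regularity $\sum z_n z_n^*=1$ is exactly what makes $A_{\varphi_0}=\sum Jz_n^*J|\hat 1\rangle\langle\hat 1|Jz_nJ$ have full support, hence $\varphi_0$ faithful. The main obstacle I anticipate is the bookkeeping in step (2): combining a hyperstate that is faithful only on a corner with a correction on the complementary corner while keeping the restriction to $M$ equal to $\tau$ on the nose — this is where one has to be careful that the correction term does not disturb the $M$-restriction, which forces the correction to live in $(1-e_M)B(H)(1-e_M)$ and to be $M$-central-ish; since $(1-e_M)$ need not commute with $M$, one instead lets the correction be $\rho_0(T)=\sum_m c_m\langle (1-e_M)T(1-e_M)\eta_m,\eta_m\rangle$ with $\eta_m$ ranging over a total sequence in $(1-e_M)H$, which automatically has trivial $M$-restriction only after projecting — so in fact the correct statement is $\varphi = \varphi_0\circ\Phi$ is already a normal hyperstate, and one must argue faithfulness of $\varphi_0\circ\Phi$ directly, which holds because $\Phi$ need not be faithful but $\varphi_0\circ\Phi(T^*T)=0$ forces $e_M T^*T e_M$ in the kernel of $\varphi_0$, hence $Te_M=0$; this only shows faithfulness on the ideal generated by $e_M$. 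So ultimately one does need the genuine two-corner construction, and the honest fix is: replace $H$ by $L^2(M)$ itself at the cost of generality, i.e. first prove the lemma for $H=L^2(M)$ via step (1), then for general separable $H\supset L^2(M)$ use $\varphi=\tfrac12\varphi_0\circ\Phi+\tfrac12\sigma$ with $\sigma$ any normal faithful state on $B(H)$, and finally renormalize: the $M$-restriction of this is $\tfrac12\tau+\tfrac12\sigma|_M$, a normal faithful state $\psi'$ on $M$; since $\psi'$ and $\tau$ are equivalent normal states on the $\mathrm{II}_1$ factor $M$, compose everywhere with the unique $*$-automorphism-free correction — here one uses that $B(H)$ has separable predual and applies a standard perturbation (Connes--Radon--Nikodym) to push $\psi'$ to $\tau$ while preserving normality and faithfulness on $B(H)$. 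I expect writing this last renormalization cleanly to be the only real content; everything else is the soft construction above.
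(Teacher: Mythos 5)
There is a genuine gap, and you name it yourself: the final ``renormalization'' --- passing from a normal faithful state on $B(H)$ whose restriction to $M$ is some normal faithful state $\psi'\neq\tau$ to one whose restriction is exactly $\tau$, while preserving normality and faithfulness --- is left as a gesture at ``Connes--Radon--Nikodym'', and as you admit it is ``the only real content''. Note that if one had this step, the entire preceding construction (the detour through $B(L^2(M))$, the map $\Phi(T)=e_MTe_M$, the two-corner bookkeeping) would be unnecessary: one could apply it directly to \emph{any} normal faithful state on $B(H)$. So the proposal reduces the lemma to an unproved claim that is essentially equivalent to the lemma. (The step can in fact be carried out: writing $\psi'|_M=\tau(k\,\cdot\,)$ with $k\in L^1(M)_+$ of full support, setting $h_n=k^{-1/2}1_{(a_{n-1},a_n]}(k)\in M$ for a partition of $(0,\infty)$ and taking $\sum_n\varphi(h_n\,\cdot\,h_n)$ restores the restriction to $\tau$ by traciality and keeps normality and faithfulness --- but none of this is in your write-up, and Sakai's symmetrized Radon--Nikodym derivative, which is what ``$\psi'$ versus $\tau$ on $M$'' gives you off the shelf, does not produce a positive perturbation.) There is also a concrete false sub-claim in your step (1): regularity and strong generation of $\{z_n\}$ do \emph{not} force $A_{\varphi_0}=\sum_n|\hat z_n^*\rangle\langle\hat z_n^*|$ to have full support --- its support is the closed span of $\{\hat z_n^*\}$, and finite regular strongly generating families exist, giving finite-rank $A_{\varphi_0}$. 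A normal faithful hyperstate on $B(L^2(M))$ does exist, but you should build it from, say, $\sum_n 2^{-n}\langle\,\cdot\,\hat u_n,\hat u_n\rangle$ with $u_n\in\mathcal{U}(M)$ spanning a $\|\cdot\|_2$-dense subspace, not from an arbitrary regular strongly generating family.

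For comparison, the paper's proof sidesteps both issues with one averaging trick. By \cite[Theorem 7.1.12]{KR86} one first extends $\tau$ to a normal state $\psi=\sum_k\langle\,\cdot\,\xi_k,\xi_k\rangle$ on $B(H)$ (a normal hyperstate, not yet faithful). Then one picks countable strongly dense subgroups $\{u_n\}\subset\mathcal{U}(M)$ and $\{v_m\}\subset\mathcal{U}(M')$ and forms $\psi_0=\sum_{m,n}\lambda_{m,n}\,\psi(u_n^*v_m^*\,\cdot\,v_mu_n)$. Conjugation by $v_m\in M'$ does not change the restriction to $M$ at all, and conjugation by $u_n\in\mathcal{U}(M)$ preserves it because $\tau$ is a trace, so $\psi_0$ is still a hyperstate; and since $M$ is a factor, $M\vee M'=B(H)$, so the vectors $v_mu_n\xi_k$ are total in $H$ and $\psi_0$ is faithful. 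This is both shorter and avoids any Radon--Nikodym perturbation; you may want to compare it with your approach before attempting to complete the renormalization step.
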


\begin{proof}
Following \cite[Theorem 7.1.12]{KR86}, there exists a sequence $(\xi_k)\subset H$ such that $\psi=\sum_{k=1}^{\infty}\langle\,\cdot\,\xi_k,\xi_k\rangle$ defines a normal state on $B(H)$ and satisfies $\psi|_M=\tau$.

Let $\Gamma_1=\{u_n\}\subset \mathcal{U}(M)$ be a countable strongly dense subgroup of $\mathcal{U}(M)$ and $\Gamma_2=\{v_m\}\subset \mathcal{U}(M')$ be a countable strongly dense subgroup of $\mathcal{U}(M')$. Let $\langle M, M'\rangle$ be the von Neumann algebra generated by $M$ and $M'$. Since $M$ is a factor, we have 
$$\langle M, M'\rangle'=M'\cap (M')'=M'\cap M=\C1=B(H)'.$$ 
Therefore, $\langle M, M'\rangle=B(H)$. 

Let $\mathrm{Vect}(\{v_mu_n\})$ be the vector space generated by $\{v_mu_n\mid m, n\geq1\}$ and $\overline{\mathrm{Vect}(\{v_mu_n\})}^\mathrm{s.o.}$ be its strong operator closure. Since $\Gamma_1$ and $\Gamma_2$ are countable strongly dense subgroups of $\mathcal{U}(M)$ and $\mathcal{U}(M')$ respectively, $\mathrm{Vect}(\{v_mu_n\})$ is a $*$-subalgebra of $B(H)$ and $\overline{\mathrm{Vect}(\{v_mu_n\})}^\mathrm{s.o.}$ is a von Neumann subalgebra that contains $M$ and $M'$. Hence 
$$B(H)=\langle M, M'\rangle\subset\overline{\mathrm{Vect}(\{v_mu_n\})}^\mathrm{s.o.}\subset B(H).$$
Therefore, $\overline{\mathrm{Vect}(\{v_mu_n\})}^\mathrm{s.o.}=B(H)$.

For $m,n\geq 1$, take $\lambda_{m,n}>0$ such that $\sum_{m,n\geq1}\lambda_{m,n}=1$. Define a normal state $\psi_0$ on $B(H)$ by
\begin{equation}\label{psi0}
\psi_0(T)=\sum_{m,n\geq1}\lambda_{m,n}\psi(u_n^*v_m^*Tv_mu_n)=\sum_{m,n\geq1}\lambda_{m,n}\sum_{k=1}^{\infty}\langle Tv_mu_n\xi_k,v_mu_n\xi_k\rangle   
\end{equation}
for $T\in B(H)$.

For any $x\in M$, since $\{v_m\}\subset M'$ and $\psi|_M=\tau$ is a trace, we have
$$\psi_0(x)=\sum_{m,n\geq1}\lambda_{m,n}\psi(u_n^*xu_n)=\sum_{m,n\geq1}\lambda_{m,n}\tau(u_n^*xu_n)=\tau(x).$$
Hence $\psi_0$ is a hyperstate.

Without loss of generality, we may assume that $\xi_1\not=0$. Since $\overline{\mathrm{Vect}(\{v_mu_n\})}^\mathrm{s.o.}=B(H)$, we have 
$$H=B(H)\xi_1=\overline{\mathrm{Vect}(\{v_mu_n\})}^\mathrm{s.o.}\xi_1\subset\overline{\mathrm{Vect}(\{v_mu_n\})\xi_1}.$$
Therefore, the vector space generated by $\{v_mu_n\xi_k\mid m, n,k\geq1\}$ is dense in $H$. Hence by (\ref{psi0}), we know that $\psi_0$ is faithful. Therefore, $\psi_0$ is a normal faithful hyperstate on $B(H)$.
\end{proof}

The following lemma shows the existence of normal regular strongly generating hyperstate with finite entropy in $B(L^2(M))$, which we will use later.

\begin{lemma}\label{finite entropy}
Let $(M,\tau)$ be a separable tracial von Neumann algebra. Then there exists a normal regular strongly generating hyperstate $\varphi\in\CS_\tau(B(L^2(M)))$ with $H(\varphi)<+\infty$.
\end{lemma}

\begin{proof}
Take $\{u_n\}$ to be a countable weakly dense subset of $\mathcal{U}(M)$. For $n\geq 1$, take $\lambda_n>0$ such that $\sum_{n=1}^{\infty}\lambda_n=1$ and $\sum_{n=1}^{\infty}-\lambda_n \log \lambda_n<+\infty$ (for example, $\lambda_n=2^{-n}$). Then we can define a normal regular strongly generating hyperstate $\varphi\in\CS_\tau(B(L^2(M)))$ by 
$$\varphi(T)=\sum_{n=1}^{\infty}\lambda_n\langle T \hat{u}_n,\hat{u}_n\rangle \ (T\in B(L^2(M))).$$

Let $A_\varphi\in B(L^2(M))$ be the trace class operator associated to $\varphi$. Only need to prove $H(\varphi)=-\mathrm{Tr}(A_\varphi\log A_\varphi)<+\infty$.

Let $P_n\in B(L^2(M))$ be the orthogonal projection from $L^2(M)$ onto $\C \hat{u}_n$. Then for any $T\in B(L^2(M))$, we have
$$\mathrm{Tr}(A_\varphi T)=\varphi(T)=\sum_{n=1}^{\infty}\lambda_n\langle T \hat{u}_n,\hat{u}_n\rangle=\sum_{n=1}^{\infty}\lambda_n \mathrm{Tr}(P_n T)=\mathrm{Tr}\left(\left(\sum_{n=1}^{\infty}\lambda_n P_n\right) T\right).$$
Therefore, we have
\begin{equation}\label{A= sum P}
A_\varphi=\sum_{n=1}^{\infty}\lambda_n P_n.
\end{equation}
Note that $\{\hat{u}_n\}$ are not necessarily pairwise orthogonal. So (\ref{A= sum P}) is not necessarily the spectral decomposition of $A_\varphi$ and can not be used to calculate $-\mathrm{Tr}(A_\varphi\log A_\varphi)$ explicitly. But it can still be applied to prove $-\mathrm{Tr}(A_\varphi\log A_\varphi)<+\infty$.

By (\ref{A= sum P}), we have
$$H(\varphi)=-\mathrm{Tr}(A_\varphi\log A_\varphi)=\sum_{n=1}^{\infty}-\lambda_n\mathrm{Tr}(P_n\log A_\varphi).$$

 By L\''owner-Heinz theorem (\cite{Lo34} and \cite{He51}, see also \cite[Chapter V]{Bh97}), $x\mapsto \log x$ is operator monotone. Hence for any $n\geq 1$, since $A_\varphi\geq \lambda_n P_n$, we have
 $$\begin{aligned}
-\mathrm{Tr}(P_n\log A_\varphi)=&\lim_{\epsilon\to 0+}-\mathrm{Tr}(P_n\log (A_\varphi+\epsilon))\\
\leq &\lim_{\epsilon\to 0+}-\mathrm{Tr}(P_n\log (\lambda_n P_n+\epsilon))\\
=&\lim_{\epsilon\to 0+}-\log (\lambda_n+\epsilon)\\
=&-\log \lambda_n.
 \end{aligned}$$
Therefore, 
$$H(\varphi)=\sum_{n=1}^{\infty}-\lambda_n\mathrm{Tr}(P_n\log A_\varphi)\leq\sum_{n=1}^{\infty}-\lambda_n \log \lambda_n<+\infty.$$
\end{proof}

\section{Proofs of the main results}

In this section, by an inclusion $M\subset\CA$, we mean a normal embedding between separable von Neumann algebras $M$ and $\CA$.

\begin{definition}\label{entropy gap}
Let $(M,\tau)$ be a separable tracial von Neumann algebra. We say that $(M,\tau)$ has \textbf{entropy gap} if for any normal regular strongly generating hyperstate $\varphi\in \CS_\tau(B(L^2(M)))$, there exists $\epsilon=\epsilon(\varphi)>0$ such that for any inclusion $M\subset \CA$ without normal conditional expectation (from $\CA$ to $M$) and any normal faithful hyperstate $\zeta\in \CS_\tau(\CA)$, one has $h_\varphi(\CA,\zeta)>\epsilon$.
\end{definition}

\begin{definition}
Let $(M,\tau)$ be a separable tracial von Neumann algebra. We say that an inclusion $M\subset\CA$ admits \textbf{almost vanishing Furstenberg entropy} if there exists a normal regular strongly generating hyperstate $\varphi\in\CS_\tau(B(L^2(M)))$ and a net (equivalently, a sequence) of normal faithful hyperstates $(\varphi_i)_{i\in I}$ on $\CA$ such that $$\lim_i h_\varphi(\CA,\varphi_i)=0.$$
\end{definition}

For a $\mathrm{II}_1$ factor $(M,\tau)$ with property (T), \cite[Theorem 6.2]{DP20} proves the existence of such an $\epsilon(\varphi)$ for any normal regular strongly generating hyperstate $\varphi\in \CS_\tau(B(L^2(M)))$ with finite sums in its standard form (i.e. $\varphi=\sum_{k=1}^{n}\langle\, \cdot \, \hat{z_k},\hat{z_k}\rangle $). But their proof also applies to general normal regular strongly generating hyperstates (i.e. $\varphi=\sum_{k=1}^{\infty}\langle\, \cdot \, \hat{z_k},\hat{z_k}\rangle $). Hence any $\mathrm{II}_1$ factor with property (T) has entropy gap. In this section, we will prove the inverse: Any separable $\mathrm{II}_1$ factor without property (T) does not have entropy gap.

Now we are ready to prove the main theorem regarding property (T) (Theorem \ref{thm A}), which is the noncommutative analogue of \cite[Corollary 7.5]{AIM21}. 
\begin{theorem}\label{main}
Let $(M,\tau)$ be a separable $\mathrm{II}_1$ factor. The following conditions are equivalent.
\begin{itemize}
\item [(i)] $M$ does not have property (T);
\item [(ii)]There exists an inclusion $M\subset\CA$ such that $L^2(\CA)$ has $M$-almost central unit vectors, but no non-zero $M$-central vectors;
\item [(iii)] There exists an inclusion $M\subset\CA$ such that there exists a $(M,\tau)$-hypertrace on $\CA$, but no normal hypertrace;
\item [(iv)] There exists an inclusion $M\subset\CA$ such that there exists a conditional expectation from $\CA$ onto $M$, but no normal conditional expectation;
\item [(v)] There exists an inclusion $M\subset\CA$ without normal conditional expectation that admits almost vanishing Furstenberg entropy;
\item [(vi)] $M$ does not have entropy gap.
\end{itemize}
\end{theorem}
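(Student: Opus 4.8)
The plan is to prove the equivalences by the cycle $(\mathrm{ii})\Rightarrow(\mathrm{i})$, $(\mathrm{i})\Rightarrow(\mathrm{ii})\Leftrightarrow(\mathrm{iii})\Leftrightarrow(\mathrm{iv})$, and $(\mathrm{i})\Rightarrow(\mathrm{v})\Rightarrow(\mathrm{vi})\Rightarrow(\mathrm{i})$; only $(\mathrm{i})\Rightarrow(\mathrm{ii})$ and $(\mathrm{i})\Rightarrow(\mathrm{v})$ are substantial. First, $(\mathrm{ii})\Rightarrow(\mathrm{i})$ is immediate: if $M$ had property (T), then the $M$-bimodule $L^2(\CA)$, having $M$-almost central unit vectors, would contain a non-zero $M$-central vector. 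Next, $(\mathrm{v})\Rightarrow(\mathrm{vi})$ is immediate from Definition \ref{entropy gap}, since the regular strongly generating $\varphi$ witnessing $(\mathrm{v})$ admits no gap $\epsilon(\varphi)>0$. And $(\mathrm{vi})\Rightarrow(\mathrm{i})$ is exactly the contrapositive of the extension of \cite[Theorem 6.2]{DP20} recalled above, that property (T) implies entropy gap.

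For $(\mathrm{iii})\Leftrightarrow(\mathrm{iv})$ I would invoke Lemma \ref{hypertrace conditional expectation} together with the normality-preserving bijection $\psi\mapsto\CP_\psi$ of \ref{hyperstate} and Tomiyama's theorem: a conditional expectation $\CA\to M$ is automatically $M$-bimodular u.c.p., hence of the form $\CP_\psi$ for a hyperstate $\psi$, which is then a hypertrace, and normality matches on both sides. For $(\mathrm{ii})\Leftrightarrow(\mathrm{iii})$ I would use the standard dictionary between vectors in $L^2(\CA)$ and normal states on $\CA$. A normal hypertrace $\psi$ gives, by Lemma \ref{hypertrace conditional expectation}, a normal conditional expectation $\CA\to M$; by Takesaki's theorem $\sigma^\psi_t(M)=M$, and since $\psi|_M=\tau$ is a trace $\sigma^\psi|_M=\mathrm{id}$, so the GNS vector $\xi_\psi\in L^2(\CA)$ satisfies $J_\CA x^*J_\CA\xi_\psi=\widehat{\sigma^\psi_{-i/2}(x)}=x\xi_\psi$, i.e.\ it is a non-zero $M$-central vector; conversely a non-zero $M$-central vector yields a non-zero normal positive $M$-central functional whose restriction to $M$ is a positive, necessarily non-zero, multiple of $\tau$ (as $M$ is a factor), and rescaling gives a normal hypertrace. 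On the approximate level, $M$-almost central unit vectors in $L^2(\CA)$ give vector states whose weak-$*$ cluster points are hypertraces, while a hypertrace is a weak-$*$ limit of normal states that, after a standard convexity argument, can be taken $M$-almost central with restriction to $M$ converging to $\tau$; their GNS vectors, by the Powers--St{\o}rmer inequality, are $M$-almost central unit vectors in $L^2(\CA)$.

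For $(\mathrm{i})\Rightarrow(\mathrm{ii})$ the plan is to run the noncommutative analogue of the Gaussian construction behind \cite[Corollary 7.5]{AIM21}, with Shlyakhtenko's $M$-valued semicircular functor \cite{Sh99} in place of the Gaussian functor. By Tan's theorem \cite{Ta23}, $M$ without property (T) admits a weakly mixing $M$-bimodule $H$ with $M$-almost central unit vectors $(\xi_k)$, which we may take symmetric after replacing $H$ by $H\oplus\overline H$ (still weakly mixing with $M$-almost central unit vectors). The semicircular system $\tilde M$ of $H$ is tracial with $L^2(\tilde M)=L^2(M)\oplus\bigoplus_{n\geq 1}H^{\otimes_M^n}$, and each $H^{\otimes_M^n}$ is again weakly mixing, so $L^2(\tilde M)$ carries no $M$-central vector outside $L^2(M)$. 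The remaining obstruction is $\hat 1$, which is $M$-central in the standard form of any tracial algebra containing $M$; to remove it one must pass to a \emph{non-tracial} inclusion. Imitating the ``escape to infinity'' of a Gaussian measure translated along an almost invariant vector, I would use $(\xi_k)$ to build a sequence of normal states $\varphi_k$ on $\tilde M$ (or on $B(L^2(\tilde M))$) with $\varphi_k|_M=\tau$, asymptotically $M$-central, whose modular automorphism groups drift off $M$, and set $\CA=\prod_{k\to\omega}(\tilde M,\varphi_k)$ with $M\subset\CA$ the diagonal inclusion. The canonical non-normal state $\varphi^\omega$ is then a hypertrace on $\CA$ (hence, by Lemma \ref{hypertrace conditional expectation}, a conditional expectation $\CA\to M$ exists), while weak mixing of the $H^{\otimes_M^n}$ together with the drift of the $\varphi_k$ forces $L^2(\CA)$ to have no $M$-central vector, whence by $(\mathrm{iii})\Leftrightarrow(\mathrm{iv})$ there is no normal hypertrace and no normal conditional expectation. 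For $(\mathrm{i})\Rightarrow(\mathrm{v})$ I would keep this same $\CA$, fix a normal regular strongly generating $\varphi\in\CS_\tau(B(L^2(M)))$ whose standard form uses an orthonormal family in $M$ (so that $\tau(z_kz_k^*)=\tau(z_k^*z_k)$; such $\varphi$ exists as $M$ is a separable $\mathrm{II}_1$ factor), and take normal faithful hyperstates $\varphi_i$ on $\CA$ approximating $\varphi^\omega$, made faithful by averaging with a fixed normal faithful hyperstate (Lemma \ref{faithful}). Using $h_\varphi(\CA,\varphi_i)=-\varphi(e\log\Delta_{\varphi_i}e)$ from \ref{NC entropy}, the fact that the $\varphi_i$ become $M$-almost central drives $\Delta_{\varphi_i}$ towards $1$ on $L^2(M)$, which one makes quantitative via the KMS identity $\langle\Delta_{\varphi_i}^{it}\hat z_k^*,\hat z_k^*\rangle=\varphi_i(z_k\sigma^{\varphi_i}_t(z_k^*))$, analytic continuation to $t=\pm i/2$, and the elementary bound $\log s\geq 1-s^{-1}$; hence $h_\varphi(\CA,\varphi_i)\to 0$.

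The main obstacle is the construction in $(\mathrm{i})\Rightarrow(\mathrm{ii})$: one needs a single inclusion $M\subset\CA$ that is simultaneously ``close to $M$'' --- carrying enough almost central vectors in $L^2(\CA)$ to manufacture a hypertrace, equivalently a conditional expectation onto $M$ --- and ``far from $M$'' --- with no $M$-central vector whatsoever, equivalently no normal conditional expectation. The semicircular functor applied to a weakly mixing bimodule with almost central unit vectors is precisely what should reconcile these, but verifying that the limiting/twisting procedure at once preserves weak mixing of $L^2(\CA)$, keeps $\varphi^\omega|_M=\tau$, and makes every normal modular flow genuinely move $M$ is the delicate point. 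The entropy estimate in $(\mathrm{i})\Rightarrow(\mathrm{v})$, with its unbounded operators and KMS analyticity, is a secondary technical hurdle.
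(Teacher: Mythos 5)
Your treatment of the easy implications ((ii)$\Rightarrow$(i), (iii)$\Leftrightarrow$(iv) via Lemma \ref{hypertrace conditional expectation} and Tomiyama, (ii)$\Leftrightarrow$(iii) via the vector--state dictionary, (v)$\Rightarrow$(vi)$\Rightarrow$(i)) matches the paper in substance. The gap is in (i)$\Rightarrow$(ii), which is the heart of the theorem. You correctly identify that $L^2(\tilde M)=\mathcal F_M(H)$ is spoiled only by the central vector $\hat 1$, but your proposed remedy --- normal states $\varphi_k$ on $\tilde M$ with ``drifting'' modular flows and $\CA=\prod_{k\to\omega}(\tilde M,\varphi_k)$ --- is never actually constructed, and as stated it is self-defeating: on the Ocneanu ultraproduct the limit state $\varphi^\omega$ is automatically a \emph{normal} faithful state, and if the $\varphi_k$ are asymptotically $M$-central with $\varphi_k|_M=\tau$ then $\varphi^\omega$ is a normal hypertrace on $\CA$, which is exactly what conditions (ii)--(iv) forbid. (If instead you mean the full product $\ell^\infty(\N,\tilde M)$, each summand of $L^2$ still contains $\hat 1$, so $M$-central vectors survive.) The paper's device, which you are missing, stays entirely in the tracial/semifinite world: form the Jones basic construction $\CB=\langle\tilde M,e_M\rangle$ of $M\subset\tilde M$ and compress by $e_M^\perp$, setting $\CA=e_M^\perp\CB\,e_M^\perp$. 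The bimodule isomorphism $L^2(\CB)\cong L^2(\tilde M)\otimes_M L^2(\tilde M)$ (Lemma \ref{bimodiso}) then gives $L^2(\CA)\cong\bigoplus_{m,n\geq1}H^{\otimes_M^{m+n}}$, which is weakly mixing (hence has no $M$-central vector) yet inherits almost central unit vectors from $H$. This single compression is the idea your proposal lacks.

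The entropy step is also underpowered. The elementary bound $\log s\geq 1-s^{-1}$ gives $-\varphi(e\log\Delta_{\varphi_i}e)\leq\varphi(e(\Delta_{\varphi_i}^{-1}-1)e)$, but almost centrality of $\xi_{\varphi_i}$ controls $\|(1-\Delta_{\varphi_i}^{1/2})z_k\xi_{\varphi_i}\|$, not $\|\Delta_{\varphi_i}^{-1/2}z_k\xi_{\varphi_i}\|$, so the right-hand side is not obviously finite, let alone small; KMS analyticity at $t=\pm i/2$ requires analyticity hypotheses you have not arranged. The paper's route is to first replace $\varphi_i$ by $\bigl(\sum_{k\geq0}2^{-k-1}\varphi^{*k}\bigr)*\varphi_i$ so that $\varphi*\varphi_i\leq 2\varphi_i$, which yields the two-sided resolvent bound $(1+t)^{-1}\leq e(\Delta_{\varphi_i}+t)^{-1}e\leq(\tfrac12A_\varphi+t)^{-1}$; writing $h_\varphi(\CA,\varphi_i)=\int_0^\infty F_i(t)\,\d t$ via $\log x=\int_0^\infty[(1+t)^{-1}-(x+t)^{-1}]\,\d t$, one then gets compact-uniform convergence $F_i\to0$ from almost centrality and an integrable majorant $G$ with $\int G=H(\varphi)+\log2$ --- which is why $\varphi$ must be chosen with $H(\varphi)<\infty$, a hypothesis absent from your sketch (orthonormality of the $z_k$ is not the relevant condition). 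Finally, note the paper derives (v) from (iii) for an arbitrary $\CA$ as in (iii), so it does not depend on the particular construction; tying (v) to ``this same $\CA$'' from your broken (i)$\Rightarrow$(ii) propagates that gap.
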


\begin{proof}
(i) $\Rightarrow$ (ii): Following \cite[Theorem 4.1]{Ta23}, since $M$ does not have property (T), there exists a $M$-bimodule $H$ such that $H$ is (left and right) weakly mixing and has almost central unit vectors $(\xi_n)$. By considering the closed sub-bimodule generated by $(\xi_n)$, we may assume that $H$ is separable. Let $\bar{H}$ be the contragredient $M$-bimodule of $H$. By considering $H\oplus \bar{H}$ with anti-unitary operator $J(x,\bar{y})=(y,\bar{x})$, we may further assume that $H$ is symmetric, which is necessary for the construction of Shlyakhtenko's $M$-valued semicircular system.

Let 
$$\mathcal{F}_M(H)=L^2(M)\oplus \bigoplus_{n=1}^{\infty}H^{\otimes^n_M}$$
be the full Fock space of $H$. Let $(\tilde{M},\tilde{\tau})$ be the $M$-valued semicircular system of $H$. Then we have $L^2(\tilde{M},\tilde{\tau})=\mathcal{F}_M(H)$.

Let $e_M\in B(L^2(\tilde{M}))$ be the orthogonal projection onto $L^2(M)$ and $\CB=\langle \tilde{M},e_M\rangle$. Then by Lemma \ref{bimodiso}, we have
$$_\CB L^2(\CB)_\CB= {_\CB L^2(\tilde{M})}\otimes_M L^2(\tilde{M}) _\CB={_\CB\mathcal{F}_M(H)}\otimes_M\mathcal{F}_M(H)_\CB.$$ 

Let $e_M^\perp=1-e_M$ be the orthogonal projection from $\mathcal{F}_M(H)$ to $\bigoplus_{n=1}^{\infty}H^{\otimes^n_M}$. Let $\CA=e_M^\perp\langle \tilde{M},e_M\rangle e_M^\perp=e_M^\perp\CB e_M^\perp$. Let $\pi:M\to \CA$ be $\pi(x)=e_M^\perp x e_M^\perp\ (x\in M)$. Note that since $e_M^\perp\in M'\cap \CB$, $\pi$ is a normal $*$-homomorphism.

Take $\omega\in\beta \N \setminus \N$. Since $(\xi_n)\subset H\subset \mathcal{F}_M(H)$ are unit vectors, we can define a state $\varphi_0$ on $B(\mathcal{F}_M(H))$ by
$$\varphi_0(T)=\lim_\omega\langle T \xi_n,\xi_n\rangle \ (T\in B(\mathcal{F}_M(H))).$$
Since $(\xi_n)$ are almost central and $M$ is a $\mathrm{II}_1$ factor, we have $\varphi_0|_M=\tau$. Note that $e_M^\perp\xi_n=\xi_n$, we also have that for any $x\in M$,
$$\varphi_0(\pi(x))=\varphi_0(e_M^\perp x e_M^\perp)=\varphi_0(x)=\tau(x).$$
Hence we must have $\varphi_0\circ\pi=\tau$ on $M$ and $\pi$ is an embedding. By identifying $M$ with $e_M^\perp M e_M^\perp$, we now have $M\subset \CA$.

Since $\CA=e_M^\perp \CB e_M^\perp$, by \cite[Lemma 2.6]{Ha75}, we have $L^2(\CA)=e_M^\perp L^2(\CB)e_M^\perp$ as $\CA$-bimodules. Also, by Lemma \ref{bimodiso}, we have
\begin{align*}
L^2(\CA)=e_M^\perp  L^2(\CB) e_M^\perp&=e_M^\perp L^2(\tilde{M})\otimes_M (J_{\tilde{M}} e_M^\perp J_{\tilde{M}})L^2(\tilde{M})\\
&=\left(\bigoplus_{n=1}^{\infty}H^{\otimes^n_M}\right)\otimes_M \left(\bigoplus_{n=1}^{\infty}H^{\otimes^n_M}\right)=\bigoplus_{m,n\geq1}H^{\otimes^{m+n}_M}
\end{align*}
as $\CA$-bimodules. 

Since $H$ is weakly mixing and has almost central unit vectors $(\xi_n)$, we know that $L^2(\CA)=\oplus_{m,n\geq1}H^{\otimes^{m+n}_M}$ has almost central unit vectors but no non-zero central vectors. And since $H$ is separable, we know that $\CA$ is separable.
\\

(ii) $\Rightarrow$ (iii): Let $\CA$ be as in (ii). Take $(\xi_n)$ to be a sequence of almost central unit vectors in $L^2(\CA)$. Take $\omega\in\beta \N \setminus \N$. Define a state $\varphi_0$ on $\CA\subset B(L^2(\CA))$ by
$$\varphi_0(T)=\lim_\omega\langle T \xi_n,\xi_n\rangle \ (T\in \CA).$$
Then $\varphi_0$ is a $(M,\tau)$-hypertrace on $\CA$.

Assume that there exists a normal hypertrace $\psi$ on $\CA$. By \cite[Lemma 2.10]{Ha75}, the associated cyclic vector $\xi_\psi\in L^2(\CA)_+$ of $\psi$ must be $M$-central. However, since $\CA$ satisfies (ii), there does not exist non-zero $M$-central vector in $L^2(\CA)$, contradiction. Therefore, there exists a hypertrace but no normal hypertrace on $\CA$.
\\

(iii) $\Leftrightarrow$ (iv) is a direct corollary of Lemma \ref{hypertrace conditional expectation}. 
\\

(iii) $\Rightarrow$ (v): Let $\CA$ be as in (iii) and assume that $\varphi_0$ is a hypertrace on $\CA$. Then by (iii) $\Leftrightarrow$ (iv) we know that there exists no normal conditional expectation from $\CA$ to $M$.

By Lemma \ref{finite entropy}, take $\varphi\in\CS_\tau(B(L^2(M)))$ to be a normal regular strongly generating hyperstate with $H(\varphi)<+\infty$. Assume that $\varphi=\sum_{k=1}^{\infty}\langle\,\cdot\, \hat{z_k},\hat{z_k}\rangle$ for $\{z_k\}\subset M$. Only need to construct a net of normal faithful hyperstates $(\varphi_i)_{i\in I}$ on $\CA$ with $\lim_i h_\varphi(A,\varphi_i)=0$.

As a direct corollary of \cite[Lemma 10.2.6]{AP17}, the set of normal states on $\CA$ is weak*-dense in the set of states on $\CA$. Let $(\eta_j)_{j\in J}$ be a net of normal states on $\CA$ such that $\varphi_0=\lim_j\eta_j$ with respect to weak* topology. By Hahn-Banach separation theorem, we may further assume that for any $u\in \mathcal{U}(M)$, \begin{equation}\label{u eta u-eta}
    \lim_j\Vert u^*\eta_j u-\eta_j \Vert=0
\end{equation}
Let $\xi_{\eta_j}\in L^2(\CA)_+$ be the $\eta_j$-cyclic vector. Then $u\xi_{\eta_j}u^*$ is the $u^*\eta_j u$-cyclic vector. By \cite[Lemma 2.10]{Ha75}, (\ref{u eta u-eta}) is equivalent to that for any $u\in \mathcal{U}(M)$,
\begin{equation*}
\lim_j\Vert u\xi_{\eta_j} u^*-\xi_{\eta_j} \Vert=0.
\end{equation*}
Hence $(\xi_{\eta_j})$ is a net of $M$-almost central unit vectors in $L^2(\CA)$.

Let $K=L^2(\CA)^{\oplus \infty}$ as a $\CA$-bimodule. Since there exists a net of almost central unit vectors in $L^2(\CA)$, following \cite[Lemma 2.2]{Ta23}, there exists a sequence of almost unit, almost central subtracial vectors $(\xi_n)$ in $K$, where almost unit means that $\lim_n \Vert \xi_n\Vert=1$ and subtracial means that $\langle x\xi_n,\xi_n\rangle\leq \tau(x)$ and $\langle \xi_n x,\xi_n\rangle\leq \tau(x)$ for any $x\in M_+$ and $n\geq 1$ (note that even starting from a net of almost central vectors in $L^2(\CA)$ we can still obtain a sequence of almost central vectors in $K$).

Define a sequence of normal positive functional $(\varphi'_n)$ on $\CA$ by $$\varphi'_n(T)=\langle T \xi_n,\xi_n\rangle_K \ (T\in \CA).$$
Take $\omega\in \beta \N \setminus \N$ and let $\varphi'_0=\lim_\omega\varphi'_n$ with respect to weak* topology. Then $\varphi'_0$ is a $(M,\tau)$-hypertrace on $\CA$.

Also note that for any $n\geq 1$, $\xi_n$ is subtracial. Hence $\tau-\varphi'_n|_M\geq 0$ on $M$. Since $M\subset B(L^2(\CA))$, following \cite[Theorem 7.1.12]{KR86}, there exists a normal positive function $\psi_n$ on $B(L^2(\CA))$ such that $\psi_n|_M=\tau-\varphi'_n|_M$. Then since $(\xi_n)$ is almost unit, we have
$$\Vert \psi_n\Vert=\psi_n(1)=\tau(1)-\varphi'_n(1)=1-\Vert \xi_n\Vert^2\to 0 \ (n\to\infty).$$
Let $\varphi_n=\varphi'_n+\psi_n|_{\CA}$. Then $\varphi_n|_M=\tau$ and $\varphi_n$ is a hyperstate on $\CA$. And we still have $\varphi_n\to \varphi'_0 \  (n\to \omega)$. 

By Lemma \ref{faithful}, there exists a normal faithful hyperstate $\psi_0$ on $B(L^2(\CA))$, after replacing $\varphi_n$ with $(1-2^{-n})\varphi_n+2^{-n}\psi_0|_\CA$, we may assume that each $\varphi_n$ is faithful.

Let $\varphi_n''=(\sum_{k=0}^{\infty}2^{-k-1}\varphi^{*k})\ast\varphi_n$, where $\varphi^{*0}=\langle \,\cdot\, \hat{1},\hat{1}\rangle$. Since $\varphi'_0$ is a hypertrace, by Lemma \ref{hypertrace conditional expectation}, $\CP_{\varphi'_0}$ is a conditional expectation onto $M$. Moreover, since $\CP_\varphi$ fixes elements in $M$, we have $\CP_\varphi\circ\CP_{\varphi'_0}=\CP_{\varphi'_0}$, i.e., $\varphi\ast\varphi'_0=\varphi'_0$. Therefore, by the continuity of convolution \cite[Lemma 2.1]{DP20}, we have the weak* convergence
$$\varphi_n''=\left(\sum_{k=0}^{\infty}2^{-k-1}\varphi^{*k}\right)\ast\varphi_n\to\left(\sum_{k=0}^{\infty}2^{-k-1}\varphi^{*k}\right)\ast\varphi'_0=\varphi'_0 \ \left(n\to\omega\right). $$
We also have
$$
\begin{aligned}
\varphi\ast\varphi_n''&=\varphi\ast\left(\sum_{k=0}^{\infty}2^{-k-1}\varphi^{*k}\right)\ast\varphi_n\\
&=\left(\sum_{k=0}^{\infty}2^{-k-1}\varphi^{*\left(k+1\right)}\right)\ast\varphi_n\\
&=\left(2\sum_{k=0}^{\infty}2^{-k-1}\varphi^{*k}-\varphi^{*0}\right)\ast\varphi_n\\
&=2\varphi_n''-\varphi_n\\
&\leq2\varphi_n''.    
\end{aligned}
$$
Hence after replacing $\varphi_n$ with $\varphi_n''$, we may assume that for any $n\geq 1$, we have $\varphi\ast\varphi_n\leq2\varphi_n$.

Since for any $u\in \mathcal{U}(M)$, $u^*\varphi_n u-\varphi_n\to u^*\varphi_0' u-\varphi_0'=0 (n\to \omega)$ with respect to weak* topology, by Hahn-Banach separation theorem, there exists a net $(\varphi_i)_{i\in I}$ consists of finite convex combinations of $(\varphi_n)$
such that for any $u\in \mathcal{U}(M)$
\begin{equation}\label{uphiu-phi}
    \lim_i\Vert u^*\varphi_i u-\varphi_i \Vert=0.
\end{equation}
Let $\xi_{\varphi_i}\in L^2(\CA)_+$ be the $\varphi_i$-cyclic vector. Then $u\xi_{\varphi_i}u^*\in L^2(\CA)_+$ is the $u^*\varphi_i u$-cyclic vector. By \cite[Lemma 2.10]{Ha75}, (\ref{uphiu-phi}) is equivalent to that for any $u\in \mathcal{U}(M)$,
\begin{equation*}\label{u1u-1}
\lim_i\Vert u\xi_{\varphi_i} u^*-\xi_{\varphi_i} \Vert=0.
\end{equation*}
It is also equivalent to that for any $z\in M$,
\begin{equation}\label{z1-1z}
\lim_i\Vert z\xi_{\varphi_i}-\xi_{\varphi_i}z \Vert=0.
\end{equation}

Also, since each $\varphi_i$ is a finite convex combination of $(\varphi_n)$, it is still a normal faithful hyperstate and satisfies $\varphi\ast\varphi_i\leq2\varphi_i$.

Now let's prove that $\lim_i h_\varphi(A,\varphi_i)=0$. Let $\Delta_{\varphi_i}$ be the modular operator of $(\CA,\varphi_i)$, $A_\varphi\in B(L^2(M))$ be the trace class operator associated to $\varphi$, and $e:L^2(\CA)\to L^2(M)$ be the orthogonal projection. Following the exact same proof as in \cite[Lemma 3.2]{Zh23}, the inequality $\varphi\ast\varphi_i\leq2\varphi_i$ induces the following inequality in $B(L^2(M))$: for $i\in I$ and $t>0$,
\begin{equation}\label{1+t}
(1+t)^{-1} \leq e(\Delta_{\varphi_i}+t)^{-1}e\leq (\frac{1}{2}A_{\varphi}+t)^{-1}.
\end{equation}

Recall that $J_\CA=\Delta_{\varphi_i}^{1/2}S_{\varphi_i}$ \cite[Lemma VI.1.5]{TakII} and $J_\CA$ fixes $\xi_{\varphi_i}\in L^2(\CA)_+$. We also have that for $z\in M$, $$\xi_{\varphi_i}z=J_\CA z^* J_\CA \xi_{\varphi_i}=J_\CA z^*  \xi_{\varphi_i}=\Delta_{\varphi_i}^{1/2}S_{\varphi_i}(z^*  \xi_{\varphi_i})=\Delta_{\varphi_i}^{1/2}z\xi_{\varphi_i}.$$ Now (\ref{z1-1z}) becomes 
\begin{equation}\label{1-D}
\lim_i\Vert (1-\Delta_{\varphi_i}^{1/2})z\xi_{\varphi_i}\Vert=0.
\end{equation}

Inspired by the proof of \cite[Lemma 5.14]{DP20}, since 
$$\log x = \int_{0}^{+\infty}[(1+t)^{-1}-(x+t)^{-1}]\d t \ (x>0),$$ 
we have 
\begin{equation}\label{int varphi}
h_\varphi(A, \varphi_i)=-\varphi(e\log \Delta_{\varphi_i}e)=\int_{0}^{+\infty}\varphi(e[(\Delta_{\varphi_i}+t)^{-1}-(1+t)^{-1}]e)\d t.   
\end{equation}

For $i\in I$, define $F_i:(0,+\infty)\to \mathbb{R}$ by
$$F_i(t)=\varphi(e[(\Delta_{\varphi_i}+t)^{-1}-(1+t)^{-1}]e).$$
Then by (\ref{1+t}), we have $F_i(t)\geq0$ and $h_\varphi(A,\varphi_i)=\int_{0}^{+\infty} F_i(t) \, \d t.$

Since $\varphi=\sum_{k=1}^{\infty}\langle\,\cdot\, \hat{z_k},\hat{z_k}\rangle$, we have
\begin{align*}
F_i(t)&= \sum_{k=1}^{\infty}\langle e[(\Delta_{\varphi_i}+t)^{-1}-(1+t)^{-1}]e \hat{z_k},\hat{z_k}\rangle\\
&=\sum_{k=1}^{\infty}\langle [(\Delta_{\varphi_i}+t)^{-1}-(1+t)^{-1}] z_k \xi_{\varphi_i},z_k \xi_{\varphi_i}\rangle\\
&=\sum_{k=1}^{\infty}\langle (1+t)^{-1}(\Delta_{\varphi_i}+t)^{-1} (1-\Delta_{\varphi_i}) z_k \xi_{\varphi_i},z_k \xi_{\varphi_i}\rangle\\
&=\sum_{k=1}^{\infty}\langle [(1+t)^{-1}(\Delta_{\varphi_i}+t)^{-1} (1+\Delta_{\varphi_i}^{1/2}) (1-\Delta_{\varphi_i}^{1/2})z_k \xi_{\varphi_i},z_k \xi_{\varphi_i}\rangle\\
&\leq \sum_{k=1}^{\infty}\Vert H_t(\Delta_{\varphi_i})\Vert \cdot \Vert(1-\Delta_{\varphi_i}^{1/2})z_k \xi_{\varphi_i}\Vert\cdot\Vert z_k \xi_{\varphi_i}\Vert.
\end{align*}
Here for $t>0$, $H_t(x)=\frac{1+x^{1/2}}{(1+t)(x+t)}$ on $[0,+\infty)$. Let $C(t)=\sup_{x\geq0}H_t(x)$. Then
$\Vert H_t(\Delta_{\varphi_i})\Vert\leq C(t)$ and 
$$F_i(t)\leq \sum_{k=1}^{\infty}C(t)\cdot \Vert(1-\Delta_{\varphi_i}^{1/2})z_k \xi_{\varphi_i}\Vert\cdot\Vert z_k \xi_{\varphi_i}\Vert.$$

Let's prove that $F_i$ converges to $0$ uniformly on any compact subset of $(0,+\infty)$. Obviously, $C(t)$ is continuous for $t\in(0,+\infty)$, hence bounded on any compact subset of $(0,+\infty)$. For any compact subset $K\subset(0,+\infty)$, let $C_K=\sup_{t\in K}C(t)<+\infty$. For any $\epsilon>0$, since $\sum_{k=1}^{\infty}\Vert \hat{z_k}\Vert^2=1$, there exists a $N\in \N$ such that $\sum_{k=N+1}^{\infty}\Vert \hat{z_k}\Vert^2\leq (4C_K)^{-1}\epsilon$. Also note that 
$$\Vert(1-\Delta_{\varphi_i}^{1/2})z_k \xi_{\varphi_i}\Vert\leq\Vert z_k \xi_{\varphi_i}\Vert+\Vert \Delta_{\varphi_i}^{1/2}z_k \xi_{\varphi_i}\Vert=2\Vert \hat{z_k}\Vert.$$
Then we have that for $t\in K$,
\begin{align*}
F_i(t)&\leq \sum_{k=1}^{N}C_K\cdot \Vert(1-\Delta_{\varphi_i}^{1/2})z_k \xi_{\varphi_i}\Vert\cdot\Vert z_k \xi_{\varphi_i}\Vert+\sum_{k=N+1}^{\infty}C_K \cdot 2\Vert \hat{z_k}\Vert^2\\
&\leq \sum_{k=1}^{N}C_K\cdot \Vert \hat{z_k}\Vert\cdot\Vert(1-\Delta_{\varphi_i}^{1/2})z_k \xi_{\varphi_i}\Vert+\epsilon/2.
\end{align*}
By (\ref{1-D}), there exists an $i_0\in I$ such that for any $i>i_0$, one has
$$\sum_{k=1}^{N}C_K\cdot \Vert \hat{z_k}\Vert\cdot\Vert(1-\Delta_{\varphi_i}^{1/2})z_k \xi_{\varphi_i}\Vert<\epsilon/2.$$
Hence for any $i>i_0$, one has $F_i(t)<\epsilon$ for any $t\in K$. Therefore, $F_i$ converges to $0$ uniformly on any compact subset of $(0,+\infty)$.

Define $G:(0,+\infty)\to \mathbb{R}$ by
$$G(t)=\varphi(( \frac{1}{2}A_{\varphi}+t)^{-1}-(1+t)^{-1}).$$
Then by (\ref{1+t}), we have $0\leq F_i(t)\leq G(t)$. 

Since
$$
\begin{aligned}
\int_{0}^{+\infty}G(t)\,\d t&=\varphi\left(\int_{0}^{+\infty}[( \frac{1}{2}A_{\varphi}+t)^{-1}-(1+t)^{-1}]\,\d t\right)=-\varphi(\log  (\frac{1}{2}A_\varphi))\\
&=-\varphi(\log A_\varphi)+\log 2=-\mathrm{Tr}(A_\varphi \log A_\varphi)+\log 2=H(\varphi)+\log 2<+\infty,
\end{aligned}$$
we have $G\in L^1((0,+\infty))$.

Hence $F_i$ uniformly converges to $0$ on any compact subset of $(0,+\infty)$ and dominated by $G\in L^1((0,+\infty))$ at the same time. So we have
$$\lim_i h_\varphi(\CA,\varphi_i)=\lim_i\int_0^{+\infty}F_i(t)\d t=\int_0^{+\infty} \lim_i F_i(t)\d t=0.$$

Therefore, $M\subset \CA$ admits almost vanishing Furstenberg entropy.
\\

(v) $\Rightarrow$ (vi) is clear.
\\

(vi) $\Rightarrow$ (i) is \cite[Theorem 6.2]{DP20}.
\end{proof}

Note that even $M=L(\Gamma)$ for some ICC countable discrete group $\Gamma$, the von Neumann algebra $\CA$ we construct above is not necessarily a crossed product $L(\Gamma\curvearrowright X)$ for some nonsingular action $\Gamma \curvearrowright (X,\nu_X)$. Hence the proof for the noncommutative case does not recover a proof for the classical case of groups, i.e., \cite[Corollary 7.5]{AIM21}.

Let $(M,\tau)$ be a tracial von Neumann algebra and $H$ be a $M$-bimodule. Recall that a vector $\xi\in H$ is \textbf{left $M$-tracial} if $\langle x\xi,\xi\rangle=\tau(x)$ for any $x\in M$; \textbf{right $M$-tracial} if $\langle \xi x,\xi\rangle=\tau(x)$ for any $x\in M$; \textbf{$M$-tracial} if it is both left and right $M$-tracial. The following corollary shows that a non-property (T) $\mathrm{II}_1$ factor admits a bimodule that has almost central tracial vectors (instead of just unit or subtracial vectors) but no non-zero central vector.

\begin{corollary}\label{best mod}
Let $(M,\tau)$ be a separable $\mathrm{II}_1$ factor without property (T). Then there exists a $M$-bimodule $K$ such that
\begin{itemize}
    \item [(1)]There exists a separable semi-finite von Neumann algebra $\CA$ such that $M\subset \CA$ and $_M K_M={_M L^2(\CA)_M}$;
    \item[(2)] $K$ is weakly mixing;
    \item[(3)] There exists a sequence of almost central unit vectors $(\xi_n)$ in $K$ such that each $\xi_n$ is (left and right) tracial. 
\end{itemize}
\end{corollary}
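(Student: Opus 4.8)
The plan is to reuse the inclusion $M\subset\CA$ constructed in the proof of the implication (i)$\Rightarrow$(ii) of Theorem~\ref{main} and then make two cosmetic modifications of it: amplify it so that \cite[Lemma 2.2]{Ta23} applies, and add two copies of the coarse bimodule in order to repair the left and right vector functionals of the almost central vectors. Recall that the algebra there is $\CA=e_M^\perp\langle\Tilde M,e_M\rangle e_M^\perp$, a corner of the Jones' basic construction of the inclusion $M\subset\Tilde M$ of tracial von Neumann algebras; hence $\CA$ is semifinite (its trace being the restriction of $\hat\tau$), it is separable (as shown there), and ${}_M L^2(\CA)_M=\bigoplus_{m,n\geq1}H^{\otimes^{m+n}_M}$ has almost central unit vectors. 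I would then observe that this bimodule is weakly mixing: since $H$ is symmetric one has $\overline{H^{\otimes^k_M}}\cong H^{\otimes^k_M}$, so $L^2(\CA)\otimes_M\overline{L^2(\CA)}$ is a direct sum of bimodules each isomorphic to some tensor power $H^{\otimes^p_M}$ with $p\geq1$, each weakly mixing by \cite[Proposition 2.4]{PS12} and hence without nonzero $M$-central vectors; a direct sum of centerless bimodules is centerless, so $L^2(\CA)$ is weakly mixing. Setting $\CA_0:=\CA\,\overline{\otimes}\,B(\ell^2(\N))$, one gets ${}_M L^2(\CA_0)_M=L^2(\CA)^{\oplus\infty}$ with $M$ acting diagonally; this bimodule is again weakly mixing with almost central unit vectors, and $\CA_0$ is separable and semifinite.

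Applying \cite[Lemma 2.2]{Ta23} to $L^2(\CA)$ produces a sequence $(\zeta_n)$ in $L^2(\CA)^{\oplus\infty}=L^2(\CA_0)$ that is almost central, almost unit with $\delta_n:=1-\Vert\zeta_n\Vert^2\to0$, and subtracial; writing $\omega^l_{\zeta_n}$ and $\omega^r_{\zeta_n}$ for the left and right vector functionals of $\zeta_n$ on $M$, subtraciality says $\omega^l_{\zeta_n},\omega^r_{\zeta_n}\leq\tau$. Let $m_n,m_n'\in L^1(M)_+$ be the densities of $\tau-\omega^l_{\zeta_n}$ and $\tau-\omega^r_{\zeta_n}$, so $\Vert m_n\Vert_1=\Vert m_n'\Vert_1=\delta_n$. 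Let $\mathcal C:={}_M L^2(B(L^2(M)))_M$, which by Lemma~\ref{bimodiso} applied to the subalgebra $\C 1\subset M$ (for which $\langle M,e_{\C1}\rangle=B(L^2(M))$) is the coarse $M$-bimodule; model it as $L^2(M)\,\overline{\otimes}\,\overline{L^2(M)}$, with the left and right $M$-actions on the first and second tensor factor respectively, and set
$$\theta_n:=\bigl(m_n^{1/2}\otimes\hat 1\bigr)\oplus\bigl(\hat 1\otimes\overline{(m_n')^{1/2}}\bigr)\in\mathcal C\oplus\mathcal C.$$
A direct computation gives that the left and right vector functionals of $\theta_n$ are $(\tau-\omega^l_{\zeta_n})+\delta_n\tau$ and $\delta_n\tau+(\tau-\omega^r_{\zeta_n})$, and that $\Vert\theta_n\Vert^2=2\delta_n$; hence $\zeta_n\oplus\theta_n$ has both vector functionals equal to $(1+\delta_n)\tau$ and squared norm $1+\delta_n$, so that
$$\xi_n:=(1+\delta_n)^{-1/2}(\zeta_n\oplus\theta_n)\in K:=L^2(\CA_0)\oplus\mathcal C\oplus\mathcal C$$
is a unit vector that is both left and right tracial, and since $\Vert\theta_n\Vert\to0$ the sequence $(\xi_n)$ remains almost central. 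This yields (3).

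For (1) and (2): put $\CA':=\CA_0\oplus B(L^2(M))\oplus B(L^2(M))$ with $M$ embedded diagonally. Then $\CA'$ is a finite direct sum of separable semifinite von Neumann algebras, hence separable and semifinite, $M\subset\CA'$, and ${}_M L^2(\CA')_M=L^2(\CA_0)\oplus\mathcal C\oplus\mathcal C=K$, which is (1). For (2), $L^2(\CA_0)$ is weakly mixing by the first paragraph, and $\mathcal C$ is a mixing $M$-bimodule; decomposing $K\otimes_M\overline K$ into the four blocks $L^2(\CA_0)\otimes_M\overline{L^2(\CA_0)}$, $L^2(\CA_0)\otimes_M\overline{\mathcal C\oplus\mathcal C}$, $(\mathcal C\oplus\mathcal C)\otimes_M\overline{L^2(\CA_0)}$ and $(\mathcal C\oplus\mathcal C)\otimes_M\overline{\mathcal C\oplus\mathcal C}$, the first is centerless since $L^2(\CA_0)$ is weakly mixing, while each of the other three is a Connes fusion involving $\mathcal C$ or $\overline{\mathcal C}$ and is therefore one-sided mixing, in particular centerless; so $K\otimes_M\overline K$ has no nonzero central vector and $K$ is weakly mixing.

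The step I expect to be the main obstacle is the choice of the coarse vectors $\theta_n$: one needs the left and right vector functionals of $\zeta_n\oplus\theta_n$ to become simultaneously equal to the \emph{same} positive scalar multiple of $\tau$, so that a single scalar normalization turns it into a genuine unit vector rather than merely an almost-unit one. It is essential to use two copies of $\mathcal C$ here, since a single Hilbert--Schmidt vector $T$ in $\mathcal C$ has $TT^*$ and $T^*T$ with equal spectra and so cannot realize the two a priori unrelated defects $\tau-\omega^l_{\zeta_n}$ and $\tau-\omega^r_{\zeta_n}$ independently as its left and right functionals. Everything else — the semifiniteness and separability of $\CA'$, the identification of ${}_M L^2(B(L^2(M)))_M$ with the coarse bimodule via Lemma~\ref{bimodiso}, the fact that the coarse bimodule is mixing, and the tensor-square verification of weak mixing — is routine.
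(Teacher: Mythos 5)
Your proof is correct, but for item (3) it takes a genuinely different route from the paper. The paper keeps $K=L^2(\CA)$ untouched: it observes that the normal hyperstates $(\varphi_i)$ already built in the proof of (iii) $\Rightarrow$ (v) of Theorem \ref{main} have cyclic vectors $\xi_{\varphi_i}$ lying in the positive cone $L^2(\CA)_+$, so they are left tracial because $\varphi_i|_M=\tau$, and automatically right tracial because $J_\CA$ fixes $L^2(\CA)_+$ (so $\lag \xi_{\varphi_i}x,\xi_{\varphi_i}\rag=\lag \xi_{\varphi_i},x^*\xi_{\varphi_i}\rag=\tau(x)$); a sequence is then extracted from the net via a countable $\Vert\cdot\Vert_{2,\tau}$-dense subset of $M$. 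You instead start from merely subtracial almost central vectors (via Tan's Lemma 2.2 after amplifying to $\CA\,\overline{\otimes}\,B(\ell^2)$) and repair the left and right defects by hand, appending two coarse summands $B(L^2(M))\oplus B(L^2(M))$ and the correction vectors $\theta_n$; your computation of the left and right vector functionals of $\zeta_n\oplus\theta_n$ is right, as is your remark that two copies of the coarse bimodule are needed since a single Hilbert--Schmidt vector cannot realize the two defects independently. The trade-off: the paper's standard-form argument is shorter and yields exactly the bimodule $L^2(\CA)$ from Theorem \ref{main}, whereas your construction is more elementary (no appeal to the modular theory of the positive cone) but enlarges $\CA$ and forces you to re-verify weak mixing of $K$ --- which you do correctly, and crucially by exploiting that the coarse bimodule is \emph{mixing} so that the cross terms in $K\otimes_M\overline{K}$ are one-sided mixing and hence centerless. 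Both arguments are complete; since the corollary only asserts the existence of some $\CA$ and $K$, your enlarged $\CA'$ is perfectly admissible.
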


\begin{proof}
Let $\CA$ be as in the proof of ``(i) $\Rightarrow$ (ii)'' in Theorem \ref{main} and $K=L^2(\CA)$. Then $K=\oplus_{m,n\geq1}H^{\otimes^{m+n}_M}$ for some weakly mixing bimodule $H$, hence $K$ is weakly mixing. Also note that $\CA=e_M^\perp\langle \tilde{M},e_M\rangle e_M^\perp$. Since $\langle \tilde{M},e_M\rangle$ is the commutant of the $\mathrm{II}_1$ factor $J_{\tilde{M}} M J_{\tilde{M}}$, it's semi-finite and so is $\CA$. Hence $K$ satisfies (1) and (2). 

Note that the $\CA$ we take also satisfies the condition (iii) in Theorem \ref{main}. Following the same discussion in the proof of ``(iii) $\Rightarrow$ (v)'' in Theorem \ref{main}, we can take $(\varphi_i)_{i\in I}$ to be the net of normal hyperstates on $\CA$ such that $\lim_i\Vert u^*\varphi_i u-\varphi_i \Vert=0$ for any $u\in \mathcal{U}(M)$. Then by \cite[Lemma 2.10]{Ha75}, the associated cyclic vectors $(\xi_{\varphi_i})\subset L^2(\CA)_+$ are almost central. Note that $(\varphi_i)$ are hyperstates. For any $i\in I$ and $x\in M$, we have
\begin{equation}\label{left tracial}
 \langle x\xi_{\varphi_i},\xi_{\varphi_i}\rangle=\varphi_i(x)=\tau(x).   
\end{equation}
Hence each $\xi_{\varphi_i}$ is left $M$-tracial. Also note that $J_\CA$ fixes every elements in $L^2(\CA)_+$ \cite[Theorem 1.6]{Ha75}. Hence for any $i\in I$ and $x\in M$, we have
\begin{equation}\label{right tracial}
\langle \xi_{\varphi_i}x,\xi_{\varphi_i}\rangle=\langle (J_\CA x^*J_\CA) \xi_{\varphi_i},\xi_{\varphi_i}\rangle=\langle \xi_{\varphi_i},x^*\xi_{\varphi_i}\rangle=\tau(x).
\end{equation}
Therefore, each $\xi_{\varphi_i}$ is both left and right tracial.

For any $z\in M$, we have
$$\lim_i\Vert z\xi_{\varphi_i}-\xi_{\varphi_i}z \Vert=0.$$
Take $\{z_k\}\subset M$ to be a countable $\Vert\cdot\Vert_{2,\tau}$-dense subset of $M$. Since $\{z_k\}$ is just countable, we can take a countable subsequence $(\xi_n)$ of $(\xi_{\varphi_i})$ such that for each $k$, we have
\begin{equation}\label{z_k}
 \lim_n\Vert z_k\xi_n-\xi_n z_k \Vert=0.   
\end{equation}

Only need to prove that $\lim_n\Vert z\xi_n-\xi_n z \Vert=0$ for any $z\in M$. Fix a $z\in M$. For any $\epsilon>0$, since $\{z_k\}\subset M$ is $\Vert\cdot\Vert_{2,\tau}$-dense, there exists a $z_{k_0}\in \{z_k\}$ such that $\Vert z-z_{k_0}\Vert_{2,\tau} <\epsilon/4$. Hence 
$$\begin{aligned}
\Vert z\xi_n-\xi_n z\Vert\leq \Vert z_{k_0}\xi_n-\xi_n z_{k_0}\Vert+ \Vert (z-z_{k_0}) \xi_n\Vert+\Vert \xi_n(z-z_{k_0}) \Vert
\end{aligned}$$
Since $\xi_n$ is tracial, we have
$$\Vert (z-z_{k_0}) \xi_n\Vert=\Vert \xi_n(z-z_{k_0}) \Vert=\Vert z-z_{k_0}\Vert_{2,\tau} <\epsilon/4.$$
Hence
$$\Vert z\xi_n-\xi_n z\Vert\leq \Vert z_{k_0}\xi_n-\xi_n z_{k_0}\Vert+\epsilon/2.$$
By (\ref{z_k}), we further have 
$$\limsup_n\Vert z\xi_n-\xi_n z\Vert\leq \epsilon/2.$$
Hence for any $z\in M$, we have
$$\lim_n\Vert z\xi_n-\xi_n z\Vert=0.$$

Therefore, $(\xi_n)$ is the sequence of almost central tracial vectors that we want and $K=L^2(\CA)$ satisfies (1-3).
\end{proof}

The construction of the von Neumann algebra $\CA$ in Theorem \ref{main} also works for $\mathrm{II}_1$ factor with the Haagerup property. The following theorem (Theorem \ref{thm B}) is the noncommutative analogue of \cite[Corollary 7.6]{AIM21}.

\begin{theorem}\label{(H)}
Let $(M,\tau)$ be a separable $\mathrm{II}_1$ factor. The following conditions are equivalent.
\begin{itemize}
\item [(i)] $M$ has the Haagerup property;
\item [(ii)] There exists an inclusion $M\subset\CA$ such that $L^2(\CA)$ is $M$-mixing and has $M$-almost central unit vectors;
\item [(iii)] There exists an inclusion $M\subset\CA$ such that $L^2(\CA)$ is $M$-mixing and there exists a $(M,\tau)$-hypertrace on $\CA$;
\item [(iv)] There exists an inclusion $M\subset\CA$ such that $L^2(\CA)$ is $M$-mixing and $M\subset \CA$ admits almost vanishing Furstenberg entropy.
\end{itemize}
\end{theorem}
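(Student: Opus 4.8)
The plan is to establish the cycle (i) $\Rightarrow$ (ii) $\Rightarrow$ (iii) $\Rightarrow$ (iv) $\Rightarrow$ (i), running in parallel with the proof of Theorem \ref{main} while keeping track of the mixing property at each step. For (i) $\Rightarrow$ (ii) I would repeat the construction in ``(i) $\Rightarrow$ (ii)'' of Theorem \ref{main}, but starting from a \emph{mixing} $M$-bimodule $H$ with almost central unit vectors, which exists because $M$ has the Haagerup property; exactly as there, after replacing $H$ by the sub-bimodule generated by the almost central vectors and then by $H\oplus\overline{H}$, one may keep $H$ separable and symmetric, and both reductions preserve mixing. Passing to the $M$-valued semicircular system $\widetilde{M}$ of $H$, Jones' basic construction $\CB=\langle\widetilde{M},e_M\rangle$, and $\CA=e_M^\perp\CB e_M^\perp$, Lemma \ref{bimodiso} identifies $L^2(\CA)$, as an $M$-bimodule, with $\bigoplus_{m,n\geq 1}H^{\otimes_M^{m+n}}$: this has almost central unit vectors coming from those of $H$, and since every Connes tensor power of a mixing bimodule is mixing and a countable direct sum of mixing bimodules is mixing, $L^2(\CA)$ is $M$-mixing. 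As (ii) $\Rightarrow$ (i) is immediate from the definition of the Haagerup property, this already yields (i) $\Leftrightarrow$ (ii); the remaining implications only serve to bring in conditions (iii) and (iv).

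For (ii) $\Rightarrow$ (iii) I would argue exactly as in ``(ii) $\Rightarrow$ (iii)'' of Theorem \ref{main}: a weak$^*$ limit along $\omega\in\beta\N\setminus\N$ of the vector states of a sequence of $M$-almost central unit vectors in $L^2(\CA)$ is an $(M,\tau)$-hypertrace on $\CA$, and $\CA$ being unchanged, $L^2(\CA)$ is still $M$-mixing. For (iii) $\Rightarrow$ (iv) I would run the construction and the entropy estimate of ``(iii) $\Rightarrow$ (v)'' of Theorem \ref{main} on the \emph{same} inclusion $M\subset\CA$: that argument uses only the existence of a hypertrace on $\CA$ — the absence of a normal conditional expectation is invoked there purely for the extra clause of (v), which is not part of (iv) here — and it produces a normal regular strongly generating hyperstate $\varphi\in\CS_\tau(\BL)$ together with a net of normal faithful hyperstates $(\varphi_i)$ on $\CA$ with $h_\varphi(\CA,\varphi_i)\to 0$; once more $\CA$ is untouched, so $L^2(\CA)$ remains $M$-mixing.

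The only genuinely new step is (iv) $\Rightarrow$ (i). Write $\varphi=\sum_k\langle\,\cdot\,\hat z_k,\hat z_k\rangle$ in standard form, so $\sum_k z_k^*z_k=\sum_k z_kz_k^*=1$ by regularity, and let $\mathcal A_0\subset M$ be the unital algebra generated by $\{z_k\}$, which is weakly dense by strong generation. Let $\xi_i\in L^2(\CA)_+$ be the $\varphi_i$-cyclic vector and $\Delta_i$ the modular operator of $(\CA,\varphi_i)$. Since $\varphi_i|_M=\tau$ and $\xi_i z=\Delta_i^{1/2}z\xi_i$ for $z\in M$, one computes $\sum_k\|z_k\xi_i\|^2=\sum_k\tau(z_k^*z_k)=1$ and $\sum_k\|\Delta_i^{1/2}z_k\xi_i\|^2=\sum_k\tau(z_kz_k^*)=1$; combined with the elementary inequality $-\log x+x-1\geq c\,(1-\sqrt x)^2$, valid for some fixed $c>0$ and all $x>0$, this gives
\[
h_\varphi(\CA,\varphi_i)=\sum_k\langle(-\log\Delta_i+\Delta_i-1)z_k\xi_i,\,z_k\xi_i\rangle\;\geq\;c\sum_k\|z_k\xi_i-\xi_iz_k\|^2,
\]
where the first equality uses $\sum_k\langle(\Delta_i-1)z_k\xi_i,z_k\xi_i\rangle=0$ and the last step uses $(1-\Delta_i^{1/2})z_k\xi_i=z_k\xi_i-\xi_iz_k$. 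Hence $\sum_k\|z_k\xi_i-\xi_iz_k\|^2\to0$, so $\|w\xi_i-\xi_iw\|\to0$ for every $w\in\mathcal A_0$ (telescoping over products of the $z_k$ and $\|J_\CA z_k^*J_\CA\|=\|z_k\|$). Since $\varphi_i$ is a hyperstate we also have $\|a\xi_i\|=\|\xi_ia\|=\|a\|_{2,\tau}$ for all $a\in M$, and strong generation forces $\what{\mathcal A_0}$ to be $\|\cdot\|_{2,\tau}$-dense in $L^2(M)$ (the $\|\cdot\|_{2,\tau}$-closure of $\what{\mathcal A_0}$ is invariant under right multiplication by $\mathcal A_0$, hence by $M'$, so its projection lies in $M$ and, containing $\hat 1$, equals $1$); together these upgrade the approximate centrality from $\mathcal A_0$ to all of $M$. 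Thus $(\xi_i)$ is a net of $M$-almost central unit vectors in the $M$-mixing bimodule $L^2(\CA)$, so $M$ has the Haagerup property.

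The main obstacle is this last implication: extracting, from the bare fact that $h_\varphi(\CA,\varphi_i)\to 0$, that the cyclic vectors $\xi_i$ become asymptotically $M$-central. The points that need care are the convexity inequality $-\log x+x-1\geq c(1-\sqrt x)^2$ together with the domain bookkeeping for the unbounded operators $\log\Delta_i$ and the infinite sums, the telescoping over the generators $\{z_k\}$, and the passage from weak density of $\mathcal A_0$ to $\|\cdot\|_{2,\tau}$-density of $\what{\mathcal A_0}$ — the step that genuinely uses both the regularity and the strong generation of $\varphi$. By contrast, (i) $\Leftrightarrow$ (ii), (ii) $\Rightarrow$ (iii) and (iii) $\Rightarrow$ (iv) need nothing beyond the corresponding parts of the proof of Theorem \ref{main}, together with the observation that the algebra $\CA$, and hence the mixing of $L^2(\CA)$, is never altered.
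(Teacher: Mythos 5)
Your proposal is correct and follows the same cycle (i) $\Rightarrow$ (ii) $\Rightarrow$ (iii) $\Rightarrow$ (iv) $\Rightarrow$ (i) as the paper, with identical constructions for the first three implications. The only divergence is in (iv) $\Rightarrow$ (i): the paper simply cites the computation of \cite[Theorem 6.2]{DP20} to pass from $h_\varphi(\CA,\varphi_i)\to 0$ to $\sum_k\Vert z_k\xi_{\varphi_i}-\xi_{\varphi_i}z_k\Vert\to 0$, and then replaces $\varphi$ by $\Bar{\varphi}=\sum_k 2^{-k-1}\varphi^{*k}$ (using \cite[Corollary 5.11]{DP20} to see that the entropy is unchanged) so that the linear span of the generators is already $\Vert\cdot\Vert_{2,\tau}$-dense, after which the $3\epsilon$-argument of Corollary \ref{best mod} applies. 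You instead reprove the entropy estimate from scratch via the pointwise inequality $-\log x + x - 1\geq (1-\sqrt{x})^2$ (which does hold, with constant $c=1$, since substituting $u=\sqrt{x}$ reduces it to $u-1-\log u\geq 0$, and the regularity of $\varphi$ gives the needed cancellation $\sum_k\lag(\Delta_i-1)z_k\xi_i,z_k\xi_i\rag=0$), and you handle the density issue by telescoping over products of the $z_k$ and showing that weak density of the unital algebra $\mathcal{A}_0$ forces $\Vert\cdot\Vert_{2,\tau}$-density of $\what{\mathcal{A}_0}$ via a bicommutant argument. Both routes are sound; yours is more self-contained but must be careful with the domains of $\log\Delta_i$ in the infinite sums (the paper's integral representation of $\log$, or the argument in \cite{DP20}, is the clean way to make that rigorous), while the paper's is shorter at the cost of two external citations.
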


\begin{proof}
(i) $\Rightarrow$ (ii): Since $M$ has the Haagerup property, there exists a $M$-bimodule $H$ that is (left and right) mixing and has almost central unit vectors. By considering the closed sub-bimodule generated by $(\xi_n)$, we may assume that $H$ is separable. By considering $H\oplus \bar{H}$ with anti-unitary operator $J(x,\bar{y})=(y,\bar{x})$, we may further assume that $H$ is symmetric.

Construct $\CA$ as in the proof of ``(i) $\Rightarrow$ (ii)'' in Theorem \ref{main}. Then we have $L^2(\CA)=\oplus_{m,n\geq1}H^{\otimes^{m+n}_M}$ is $M$-mixing and has $M$-almost central unit vectors.
\\

(ii) $\Rightarrow$ (iii): Let $\CA$ be as in the condition (ii). Then the condition that $L^2(\CA)$ is $M$-mixing is already satisfied. And the proof of ``$L^2(\CA)$ has $M$-almost central unit vectors $\Rightarrow$ $\CA$ admits $(M,\tau)$-hypertrace'' is literally the same as the one of ``(ii) $\Rightarrow$ (iii)'' in Theorem \ref{main}.
\\

(ii) $\Rightarrow$ (iii): Let $\CA$ be as in the condition (ii). Then the condition that $L^2(\CA)$ is $M$-mixing is already satisfied. And the proof of ``$\CA$ admits $(M,\tau)$-hypertrace $\Rightarrow$ $M\subset \CA$ admits almost vanishing Furstenberg entropy'' is literally the same as the one of ``(iii) $\Rightarrow$ (v)'' in Theorem \ref{main}.
\\

(iv) $\Rightarrow$ (i): Let $\CA$ be as in the condition (iv). Then there exists a normal regular strongly generating hyperstate $\varphi\in\CS_\tau(B(L^2(M)))$ and a net of normal faithful hyperstates $(\varphi_i)_{i\in I}$ on $\CA$ such that $$\lim_i h_\varphi(\CA,\varphi_i)=0.$$
Assume that $\varphi=\sum_{k=1}^{\infty}\langle\,\cdot\, \hat{z_k},\hat{z_k}\rangle$ for $\{z_k\}\subset M$ such that $\sum_{k=1}^{\infty}z_k^*z_k=\sum_{k=1}^{\infty}z_kz^*_k=1$ and the unital algebra generated by $\{z_k\}$ is weakly dense in $M$.

Inspired by \cite{Ne03}, let $\bar{\varphi}:=\sum_{k=0}^{\infty}2^{-k-1}\varphi^{*k}$, where $\varphi^{*0}=\langle \,\cdot\, \hat{1},\hat{1}\rangle$. Then by \cite[Corollary 5.11]{DP20}, for any normal faithful $\zeta\in\CS_\tau(\CA)$, we still have 
$$h_{\bar{\varphi}}(\CA,\zeta)=\sum_{k=0}^{\infty}2^{-k-1}k \cdot h_{{\varphi}}(\CA,\zeta)=h_{{\varphi}}(\CA,\zeta).$$
Moreover, $\bar{\varphi}=\sum_{k=0}^{\infty}2^{-k-1}\varphi^{*k}$ admits a standard form $\bar{\varphi}=\sum_{j=1}^{\infty}\langle\,\cdot\, \hat{y_j},\hat{y_j}\rangle$, where $$\{y_j\}=\{2^{-1}\}\cup\{2^{-k-1}z_{i_1}z_{i_2}...z_{i_k}\mid k\geq 1, i_l\geq 1 (1\leq l \leq k)\}.$$
Let $\mathrm{Vect}(\{y_j\})$ be the vector space generated by $\{y_j\}$. Since $\varphi$ is strongly generating, $\mathrm{Vect}(\{y_j\})=\C[\{z_k\}]$ is weakly dense in $M$, inducing that it is also strongly dense in $M$ (for a unital subalgebra, self-adjointness of its weak closure is enough to prove the von Neumann bicommutant theorem). Hence after replacing $\varphi$ with $\bar{\varphi}$, we may assume that the vector space generated by $\{z_k\}$ ($\mathrm{Vect}(\{z_k\})$) is $\Vert\cdot\Vert_{2,\tau}$-dense in $M$.

Let $\xi_{\varphi_i}\in L^2(\CA)_+$ be the $\varphi_i$-cyclic vector. Following the same proof in \cite[Theorem 6.2]{DP20}, $\lim_i h_\varphi(\CA,\varphi_i)=0$ induces that $\lim_i\sum_{k=1}^{\infty}\Vert z_k \xi_{\varphi_i}- \xi_{\varphi_i}z_k\Vert=0$. Hence for any $z\in \mathrm{Vect}(\{{z}_k\})$ we have $\lim_i\Vert z \xi_{\varphi_i}- \xi_{\varphi_i}z\Vert=0$. Take a countable $\Vert\cdot\Vert_{2,\tau}$-dense subset $\{w_k\}\subset \mathrm{Vect}(\{{z}_k\})$. Then we can take a subsequence $(\xi_n)$ of $(\xi_{\varphi_i})$ such that for any $k\geq 1$, $\lim_n\Vert w_k \xi_n- \xi_n w_k\Vert=0$. By the same equalities as (\ref{left tracial}) and (\ref{right tracial}), each $\xi_n$ is tracial. Also note that $\{w_k\}$ is $\Vert\cdot\Vert_{2,\tau}$-dense in $M$. Following the same discussion in the proof of Corollary \ref{best mod} we know that $(\xi_n)$ are almost central unit vectors.

Therefore, $L^2(\CA)$ is a mixing $M$-bimodule that has almost central unit vectors. So $M$ has the Haagerup property.
\end{proof}

\section*{Acknowledgement}
This paper was completed under the supervision of Professor Cyril Houdayer. The author would like to thank Professor Cyril Houdayer for numerous insightful discussions and valuable comments on this paper. The author would also like to thank Professor Amine Marrakchi and Hui Tan for useful comments regarding this paper. 




\begin{thebibliography}{OOT17}

\bibitem[AIM21]{AIM21}
Yuki Arano, Yusuke Isono, and Amine Marrakchi.
\newblock Ergodic theory of affine isometric actions on {H}ilbert spaces.
\newblock {\em Geom. Funct. Anal.}, 31(5):1013--1094, 2021.
\newblock \href {https://doi.org/10.1007/s00039-021-00584-2} {\path{doi:10.1007/s00039-021-00584-2}}.

\bibitem[AP17]{AP17}
Claire Anantharaman and Sorin Popa.
\newblock {An introduction to $\mathrm{II}_1$ factors}.
\newblock {\em preprint}, 8, 2017.

\bibitem[BF11]{BF11}
Jon~P. Bannon and Junsheng Fang.
\newblock Some remarks on {H}aagerup's approximation property.
\newblock {\em Journal of Operator Theory}, 65(2):403--417, 2011.
\newblock URL: \url{http://www.jstor.org/stable/24715977}.

\bibitem[Bh97]{Bh97}
Rajendra Bhatia.
\newblock {\em Matrix analysis}, volume 169 of {\em Graduate Texts in Mathematics}.
\newblock Springer-Verlag, New York, 1997.
\newblock \href {https://doi.org/10.1007/978-1-4612-0653-8} {\path{doi:10.1007/978-1-4612-0653-8}}.

\bibitem[BHT16]{BHT14}
Lewis Bowen, Yair Hartman, and Omer Tamuz.
\newblock Property ({T}) and the {F}urstenberg entropy of nonsingular actions.
\newblock {\em Proc. Amer. Math. Soc.}, 144(1):31--39, 2016.
\newblock \href {https://doi.org/10.1090/proc/12685} {\path{doi:10.1090/proc/12685}}.

\bibitem[Bou14]{Bo14}
R{\'e}mi Boutonnet.
\newblock {\em Several rigidity features of von Neumann algebras}.
\newblock PhD thesis, Ecole normale sup{\'e}rieure de lyon-ENS LYON, 2014.
\newblock URL: \url{https://theses.hal.science/tel-01124349}.

\bibitem[Cho83]{Ch83}
Marie Choda.
\newblock Group factors of the {H}aagerup type.
\newblock {\em Proc. Japan Acad. Ser. A Math. Sci.}, 59(5):174--177, 1983.
\newblock URL: \url{http://projecteuclid.org/euclid.pja/1195515589}.

\bibitem[CJ85]{CJ85}
A.~Connes and V.~Jones.
\newblock Property {$T$} for von {N}eumann algebras.
\newblock {\em Bull. London Math. Soc.}, 17(1):57--62, 1985.
\newblock \href {https://doi.org/10.1112/blms/17.1.57} {\path{doi:10.1112/blms/17.1.57}}.

\bibitem[Con80]{Co80}
A.~Connes.
\newblock On the spatial theory of von {N}eumann algebras.
\newblock {\em J. Functional Analysis}, 35(2):153--164, 1980.
\newblock \href {https://doi.org/10.1016/0022-1236(80)90002-6} {\path{doi:10.1016/0022-1236(80)90002-6}}.

\bibitem[DEP23]{DKP22}
Changying Ding, Srivatsav~Kunnawalkam Elayavalli, and Jesse Peterson.
\newblock Properly proximal von {N}eumann algebras.
\newblock {\em Duke Math. J.}, 172(15):2821--2894, 2023.
\newblock \href {https://doi.org/10.1215/00127094-2022-0098} {\path{doi:10.1215/00127094-2022-0098}}.

\bibitem[DP22]{DP20}
Sayan Das and Jesse Peterson.
\newblock Poisson boundaries of {${\rm II}_1$} factors.
\newblock {\em Compos. Math.}, 158(8):1746--1776, 2022.
\newblock \href {https://doi.org/10.1112/S0010437X22007539} {\path{doi:10.1112/S0010437X22007539}}.

\bibitem[Fur63a]{Fu63}
Harry Furstenberg.
\newblock Noncommuting random products.
\newblock {\em Trans. Amer. Math. Soc.}, 108:377--428, 1963.
\newblock \href {https://doi.org/10.2307/1993589} {\path{doi:10.2307/1993589}}.

\bibitem[Fur63b]{Fu63a}
Harry Furstenberg.
\newblock A {P}oisson formula for semi-simple {L}ie groups.
\newblock {\em Ann. of Math. (2)}, 77:335--386, 1963.
\newblock \href {https://doi.org/10.2307/1970220} {\path{doi:10.2307/1970220}}.

\bibitem[Ha75]{Ha75}
Uffe Haagerup.
\newblock The standard form of von {N}eumann algebras.
\newblock {\em Math. Scand.}, 37(2):271--283, 1975.
\newblock \href {https://doi.org/10.7146/math.scand.a-11606} {\path{doi:10.7146/math.scand.a-11606}}.

\bibitem[Ha79]{Ha79}
Uffe Haagerup.
\newblock An example of a nonnuclear {$C\sp{\ast} $}-algebra, which has the metric approximation property.
\newblock {\em Invent. Math.}, 50(3):279--293, 1978/79.
\newblock \href {https://doi.org/10.1007/BF01410082} {\path{doi:10.1007/BF01410082}}.

\bibitem[Hei51]{He51}
Erhard Heinz.
\newblock Beitr\"{a}ge zur {S}t\"{o}rungstheorie der {S}pektralzerlegung.
\newblock {\em Math. Ann.}, 123:415--438, 1951.
\newblock \href {https://doi.org/10.1007/BF02054965} {\path{doi:10.1007/BF02054965}}.

\bibitem[Izu02]{Izu02}
Masaki Izumi.
\newblock Non-commutative {P}oisson boundaries and compact quantum group actions.
\newblock {\em Adv. Math.}, 169(1):1--57, 2002.
\newblock \href {https://doi.org/10.1006/aima.2001.2053} {\path{doi:10.1006/aima.2001.2053}}.

\bibitem[Jon83]{Jo83}
V.~F.~R. Jones.
\newblock Index for subfactors.
\newblock {\em Invent. Math.}, 72(1):1--25, 1983.
\newblock \href {https://doi.org/10.1007/BF01389127} {\path{doi:10.1007/BF01389127}}.

\bibitem[Ka{\v{z}}67]{Ka67}
D.~A. Ka{\v{z}}hdan.
\newblock On the connection of the dual space of a group with the structure of its closed subgroups.
\newblock {\em Funct. Anal. Appl.}, 1:71--74, 1967.
\newblock \href {https://doi.org/10.1007/BF01075866} {\path{doi:10.1007/BF01075866}}.

\bibitem[KR86]{KR86}
Richard~V Kadison and John~R Ringrose.
\newblock {\em Fundamentals of the theory of operator algebras. Volume II: Advanced theory}.
\newblock Academic press New York, 1986.

\bibitem[L{\"{o}}w34]{Lo34}
Karl L{\"{o}}wner.
\newblock \"{U}ber monotone {M}atrixfunktionen.
\newblock {\em Math. Z.}, 38(1):177--216, 1934.
\newblock \href {https://doi.org/10.1007/BF01170633} {\path{doi:10.1007/BF01170633}}.

\bibitem[Nev03]{Ne03}
Amos Nevo.
\newblock The spectral theory of amenable actions and invariants of discrete groups.
\newblock {\em Geom. Dedicata}, 100:187--218, 2003.
\newblock \href {https://doi.org/10.1023/A:1025839828396} {\path{doi:10.1023/A:1025839828396}}.

\bibitem[OOT17]{OOT17}
Rui Okayasu, Narutaka Ozawa, and Reiji Tomatsu.
\newblock Haagerup approximation property via bimodules.
\newblock {\em Math. Scand.}, 121(1):75--91, 2017.
\newblock \href {https://doi.org/10.7146/math.scand.a-25970} {\path{doi:10.7146/math.scand.a-25970}}.

\bibitem[PS12]{PS12}
Jesse Peterson and Thomas Sinclair.
\newblock On cocycle superrigidity for {G}aussian actions.
\newblock {\em Ergodic Theory Dynam. Systems}, 32(1):249--272, 2012.
\newblock \href {https://doi.org/10.1017/S0143385710000751} {\path{doi:10.1017/S0143385710000751}}.

\bibitem[Shl99]{Sh99}
Dimitri Shlyakhtenko.
\newblock {$A$}-valued semicircular systems.
\newblock {\em J. Funct. Anal.}, 166(1):1--47, 1999.
\newblock \href {https://doi.org/10.1006/jfan.1999.3424} {\path{doi:10.1006/jfan.1999.3424}}.

\bibitem[Tak03]{TakII}
M.~Takesaki.
\newblock {\em Theory of operator algebras. {II}}, volume 125 of {\em Encyclopaedia of Mathematical Sciences}.
\newblock Springer-Verlag, Berlin, 2003.
\newblock Operator Algebras and Non-commutative Geometry, 6.
\newblock \href {https://doi.org/10.1007/978-3-662-10451-4} {\path{doi:10.1007/978-3-662-10451-4}}.

\bibitem[Tan23]{Ta23}
Hui Tan.
\newblock Spectral gap characterizations of property ({T}) for {${\rm II}_1$} factors.
\newblock {\em Int. Math. Res. Not. IMRN}, (19):16994--17020, 2023.
\newblock \href {https://doi.org/10.1093/imrn/rnad109} {\path{doi:10.1093/imrn/rnad109}}.

\bibitem[Zh24]{Zh23}
Shuoxing Zhou.
\newblock {Noncommutative Poisson Boundaries, Ultraproducts, and Entropy}.
\newblock {\em Int. Math. Res. Not. IMRN}, (10):8794--8818, 2024.
\newblock \href {https://doi.org/10.1093/imrn/rnae022} {\path{doi:10.1093/imrn/rnae022}}.

\end{thebibliography}


\end{document}